\newtheorem{thm}{Theorem}[section]
\newtheorem{cor}[thm]{Corollary}
\newtheorem{lem}[thm]{Lemma}
\theoremstyle{definition}
\newtheorem{rem}[thm]{Remark}
\newtheorem{exa}[thm]{Example}
\newcommand{\co}{\colon}
\newcommand{\id}{\mathrm{id}}
\newcommand{\kk}{\Bbbk}
\newcommand{\kt}{$\Bbbk$\nobreakdash-\hspace{0pt}}
\newcommand{\slo}{\mathrm{sl}}
\newcommand{\SLO}{\mathrm{SL}}
\newcommand{\labela}{\renewcommand{\labelenumi}{{\rm (\alph{enumi})}}}
\newcommand{\labeli}{\renewcommand{\labelenumi}{{\rm (\roman{enumi})}}}
\newcommand{\opp}{\mathrm{op}}
\newcommand{\cc}{\mathcal{C}}
\newcommand{\bb}{\mathcal{B}}
\newcommand{\dd}{\mathcal{D}}
\newcommand{\aaa}{\mathcal{A}}
\newcommand{\ff}{\mathcal{F}}
\newcommand{\uu}{\mathcal{U}}
\newcommand{\zz}{\mathcal{Z}}
\newcommand{\RR}{\mathbb{R}}
\newcommand{\Rt}{$\mathrm{R}$\nobreakdash-\hspace{0pt}}
\newcommand{\ti}{\mbox{-}\,}
\newcommand{\un}{\mathbb{1}}
\newcommand{\End}{\mathrm{End}}
\newcommand{\Hom}{\mathrm{Hom}}
\newcommand{\tr}{\mathrm{tr}}
\newcommand{\rank}{\mathrm{rank}}
\newcommand{\lev}{\mathrm{ev}}
\newcommand{\rev}{\widetilde{\mathrm{ev}}}
\newcommand{\lcoev}{\mathrm{coev}}
\newcommand{\rcoev}{\widetilde{\mathrm{coev}}}
\newcommand{\ldual}[1]{{#1}^*}
\newcommand{\adjunct}[2]{\!\!\raisebox{.6ex}{\xymatrix{\ar@/^.4pc/[r]^{#1}  &  \ar@/^.4pc/[l]^{#2}}}\!\!}
\newcommand{\rsdraw}[3]{\raisebox{-#1\height}{\scalebox{#2}{\includegraphics{#3.eps}}}}
\providecommand{\bysame}{\leavevmode\hbox to3em{\hrulefill}\thinspace}
\begin{document}
\title{On the center of fusion categories}
\author[A. Brugui\`eres]{Alain Brugui\`eres}
\author[A. Virelizier]{Alexis Virelizier}
\email{bruguier@math.univ-montp2.fr \and virelizi@math.univ-montp2.fr}
\subjclass[2010]{18D10,16T05,18C20}

\date{\today}

\begin{abstract} M\"{u}ger proved in 2003  that the center of a spherical fusion category~$\cc$ of non-zero dimension over an algebraically closed field is a modular fusion category whose dimension is the square of that of $\cc$. We generalize this theorem to a pivotal fusion category $\cc$ over an
arbitrary commutative ring $\kk$, without any condition on the dimension of the category. (In this generalized setting, modularity is understood as 2-modularity in the sense of Lyubashenko.) Our proof is based on an explicit description of the Hopf algebra structure of  the coend of the center of $\cc$.
Moreover  we show that the dimension of $\cc$ is invertible in $\kk$ if and only if any object of the center of $\cc$ is a retract of a `free' half-braiding. As a consequence,   if $\kk$ is a field, then the center of $\cc$ is  semisimple (as an abelian category) if and only if the dimension of $\cc$ is non-zero.
If in addition $\kk$ is algebraically closed, then this condition implies that the center is a fusion category, so that we recover M\"{u}ger's result.
\end{abstract}
\maketitle

\setcounter{tocdepth}{1} \tableofcontents

\section*{Introduction}
Given a monoidal category~$\cc$, Joyal and Street~\cite{JS}, Drinfeld (unpublished),  and Majid~\cite{Maj}  defined a braided category $\zz(\cc)$, called the center of $\cc$, whose objects are half-braidings of $\cc$.
M\"{u}ger~\cite{Mu} showed that the center $\zz(\cc)$ of a spherical fusion category $\cc$ of non-zero dimension over an algebraically closed field $\kk$ is a modular fusion category, and that the dimension of $\zz(\cc)$ is the square of that of $\cc$. M\"{u}ger's proof of this remarkable result relies on algebraic constructions due to Ocneanu (such as the `tube' algebra) and involves the construction  of a weak monoidal Morita equivalence
between $\zz(\cc)$ and $\cc \otimes \cc^\opp$. The modularity of the center is of special interest in $3$-dimensional quantum topology, since spherical fusion categories and modular categories are respectively the algebraic input for the construction of  the Turaev-Viro/Barrett-Westbury invariant  and of the Reshetikhin-Turaev invariant.  Indeed it has been shown recently in \cite{TVi} (see also \cite{Ba}) that, under the hypotheses of M\"{u}ger's theorem,
the  Barrett-Westbury generalization of the Turaev-Viro invariant for $\cc$ is equal to the Reshetikhin-Turaev invariant for $\zz(\cc)$.

In this paper, we generalize M\"{u}ger's theorem to pivotal fusion categories over an arbitrary   commutative ring.   More precisely, given a pivotal fusion category $\cc$ over a commutative ring $\kk$, we prove the following:
\begin{enumerate}
\labeli
\item The center $\zz(\cc)$ of $\cc$ is always modular (but not necessarily semisimple) and has dimension $\dim(\cc)^2$.
\item The scalar $\dim(\cc)$ is invertible in $\kk$ if and only if every half braiding is a retract of a so-called \emph{free half braiding}.
\item If $\kk$ is a field, then $\zz(\cc)$ is abelian semisimple if and only if $\dim(\cc)\neq 0$.
\item If $\kk$ is an algebraically closed field, then $\zz(\cc)$ is fusion if and only if $\dim(\cc)\neq 0$.
\end{enumerate}
Our proof is different from that of M\"{u}ger.  It relies on the principle that if a braided category $\bb$ has a coend, then all the
relevant information about $\bb$ is encoded in its coend, which is a universal Hopf algebra sitting in $\bb$ and endowed with a canonical Hopf algebra pairing. For instance, modularity means that the canonical pairing is non-degenerate, and the dimension of $\bb$ is that of its coend.
In particular we do not need to introduce an auxiliary category.

The center $\zz(\cc)$ of a pivotal fusion category $\cc$ always has a coend. We provide a complete and explicit description of the Hopf algebra structure  of this coend, which enables us to exhibit an integral for the coend  and an `inverse' to the pairing. Our proofs are based on
a `handleslide'  property for pivotal fusion categories.

A general description of the coend of the center of a rigid category $\cc$, together with its structural morphisms, was given in \cite{BV3}. It is an application of the theory of Hopf monads, and in particular, of the notion of double of a Hopf monad, which generalizes the Drinfeld double of a Hopf algebra. It is based on the fact that $\zz(\cc)$ is the category of modules over a certain  quasitriangular  Hopf monad $Z$ on~$\cc$ (generalizing the braided  equivalence $\zz(\mathrm{mod}_H) \simeq \mathrm{mod}_{D(H)}$ between the center of the category of modules over a finite dimensional Hopf algebra $H$ and the category of modules over the Drinfeld double $D(H)$ of $H$).  It turns out that, when $\cc$ is a fusion category, we can make this description very explicit and in particular, we can depict the structural morphisms of the coend by means of a graphical formalism for fusion categories.

Part of the results of this paper were announced (without proofs) in \cite{BV4}, where they are used to  define and compute a   $3$-manifolds invariant of Reshetikhin-Turaev type  associated with the center of $\cc$, even when the dimension of $\cc$ is not invertible.

\subsection*{Organization of the text}
In Section~\ref{sec-pifu}, we recall definitions, notations and basic results concerning pivotal and fusion  categories over a commutative ring. A graphical formalism for representing morphisms in fusion categories is provided.
In Section~\ref{main-results}, we state the main results of this paper, that is, the description of the coend of the center of a pivotal fusion category and its structural morphisms, the modularity of the center of such a category, its dimension, and a semisimplicity criterion.
Section~\ref{sect-Big-modular} is devoted to coends, Hopf algebras in braided categories, and modular categories.
Section~\ref{sect-proofs} contains the proofs of the main results.

\section{Pivotal and fusion categories}\label{sec-pifu}

Monoidal categories are assumed to be strict. This does not lead to any loss of generality, since,
in view of MacLane's coherence theorem for monoidal categories (see \cite{ML1}), all definitions and statements remain valid for non-strict monoidal categories after insertion of the suitable canonical isomorphisms.

\subsection{Rigid categories}\label{rigid categories}
Let $\cc=(\cc,\otimes,\un)$ be a monoidal category. A \emph{left dual} of an object
  $X$ of $\cc$ is an object $\leftidx{^\vee}{\!X}{}$ of $\cc$ together
with morphisms $\lev_X \co \leftidx{^\vee}{\!X}{} \otimes X \to \un$
and $\lcoev_X \co \un \to X \otimes \leftidx{^\vee}{\! X}{}$ such that
$$
(\id_X \otimes \lev_X)(\lcoev_X \otimes \id_X)=\id_X \quad \text{and} \quad (\lev_X \otimes \id_{\leftidx{^\vee}{\! X}{}})(\id_{\leftidx{^\vee}{\! X}{}} \otimes \lcoev_X)=\id_{\leftidx{^\vee}{\! X}{}}.
$$
Similarly a \emph{right dual} of $X$ is an object $X^\vee$
with morphisms $\rev_X \co X \otimes X^\vee \to \un$ and
$\rcoev_X \co \un \to X^\vee \otimes X$ such that
$$
(\rev_X \otimes \id_X)(\id_X \otimes \rcoev_X)=\id_X \quad \text{and} \quad (\id_{X^\vee} \otimes \rev_X)(\rcoev_X \otimes \id_{X^\vee})=\id_{X^\vee}.
$$
The left and right duals of an object, if they exist, are unique up to an isomorphism (preserving the (co)evaluation morphisms.)

A monoidal category $\cc$ is \emph{rigid} (or \emph{autonomous}) if every object of $\cc$ admits a left and a right dual. The choice of left and right duals for each object of a rigid $\cc$ defines a left  dual functor $\leftidx{^\vee}{?}{}\co \cc^\opp \to \cc$ and a right  dual functor $\leftidx{}{?}{^\vee}\co \cc^\opp \to \cc$,  where $\cc^\opp$ is the opposite category to $\cc$ with opposite monoidal structure. The left and right dual functors are monoidal. Note that the actual choice of left and right duals is innocuous in the sense that different choices of left (resp.\@ right) duals define canonically monoidally isomorphic left (resp.\@ right) dual
functors.

There are canonical natural monoidal isomorphisms $\leftidx{^\vee}{(X^\vee)} \simeq X \simeq (\leftidx{^\vee}{X})^\vee$,
but in general the left and right dual functors are not monoidally isomorphic.

\subsection{Pivotal categories}\label{sec-pivot}
A rigid category $\cc$ is \emph{pivotal} (or \emph{sovereign}) if it is endowed with a monoidal isomorphism between the left and the right dual functors. We may assume that this isomorphism is the identity without loss of generality. In other words, for each
object $X$ of $\cc$, we have a \emph{dual
object}~$X^*$ and four morphisms
\begin{align*}
& \lev_X \co X^*\otimes X \to\un,  \qquad \lcoev_X\co \un  \to X \otimes X^*,\\
&   \rev_X \co X\otimes X^* \to\un, \qquad   \rcoev_X\co \un  \to X^* \otimes X,
\end{align*}
such that $(X^*,\lev_X,\lcoev_X)$ is a left
dual for $X$, $(X^*, \rev_X,\rcoev_X)$ is a right
dual for~$X$, and the induced left and right dual functors coincide as monoidal functors. In particular, the \emph{dual} $f^* \co Y^* \to X^*$ of any morphism $f\co X \to Y$ in $\cc$  is
\begin{align*}
f^*&= (\lev_Y \otimes  \id_{X^*})(\id_{Y^*}  \otimes f \otimes \id_{X^*})(\id_{Y^*}\otimes \lcoev_X)\\
 &= (\id_{X^*} \otimes \rev_Y)(\id_{X^*} \otimes f \otimes \id_{Y^*})(\rcoev_X \otimes \id_{Y^*}).
\end{align*}

In what follows, for a pivotal category $\cc$, we will suppress the duality constraints
$\un^* \cong \un$ and $X^* \otimes Y^*\cong (Y\otimes X)^* $. For
example, we will write $(f \otimes g)^*=g^* \otimes f^*$ for
morphisms $f,g$ in~$\cc$.

\subsection{Traces and dimensions}\label{sect-trace} For an endomorphism $f $ of an object $X$ of   a pivotal category $\cc$, one defines the
\emph{left} and \emph{right traces} $\tr_l(f), \tr_r(f) \in
\End_\cc(\un)$ by
$$\tr_l(f)=\lev_X(\id_{\ldual{X}} \otimes f) \rcoev_X  \quad {\text
{and}}\quad
 \tr_r(f)=\rev_X( f \otimes \id_{\ldual{X}}) \lcoev_X  .
$$
They satisfy $\tr_l(gh)=\tr_l(hg)$ and $
\tr_r(gh)=\tr_r(hg)$ for any morphisms $g\co X \to Y$ and   $h\co Y
\to X$  in $\cc$. Also we have  $\tr_l(f)=\tr_r(\ldual{f})=\tr_l(f^{**})$ for
any endomorphism $f $ in~$\cc$. If
\begin{equation}\label{special}
\alpha\otimes \id_X= \id_X\otimes \alpha \quad \text{for all $\alpha\in
\End_{\cc}(\un)$ and $X$ in $\cc$,}
\end{equation}
then $\tr_l,\tr_r$
are $\otimes$-multiplicative, that is,  $ \tr_l(f\otimes g)=\tr_l(f) \,
\tr_l(g)$ and $\tr_r(f\otimes g)=\tr_r(f) \, \tr_r(g) $ for all
endomorphisms $f,g$ in $\cc$.

The \emph{left} and the  \emph{right dimensions} of   an object $X$ of $\cc$
are defined by $ \dim_l(X)=\tr_l(\id_X) $ and $
\dim_r(X)=\tr_r(\id_X) $.  Isomorphic objects  have the same
dimensions, $\dim_l(X)=\dim_r(\ldual{X})=\dim_l(X^{**})$, and $\dim_l(\un)=\dim_r(\un)=\id_{\un}$. If~$\cc$ satisfies \eqref{special}, then  left and right dimensions are $\otimes$-multiplicative: $\dim_l (X\otimes Y)= \dim_l (X)
\dim_l (Y)$ and $\dim_r (X\otimes Y)= \dim_r (X) \dim_r (Y)$ for any
$X,Y$ in $\cc$.

\subsection{Penrose graphical calculus} We represent morphisms in a category $\cc$ by plane   diagrams to be read from the bottom to the top. In a pivotal category $\cc$, the  diagrams are made of   oriented arcs colored by objects of
$\cc$  and of boxes colored by morphisms of~$\cc$.  The arcs connect
the boxes and   have no mutual intersections or self-intersections.
The identity $\id_X$ of an object $X$ of $\cc$, a morphism $f\co X \to Y$,
and the composition of two morphisms $f\co X \to Y$ and $g\co Y \to
Z$ are represented as follows:
\begin{center}
\psfrag{X}[Bc][Bc]{\scalebox{.7}{$X$}} \psfrag{Y}[Bc][Bc]{\scalebox{.7}{$Y$}} \psfrag{h}[Bc][Bc]{\scalebox{.8}{$f$}} \psfrag{g}[Bc][Bc]{\scalebox{.8}{$g$}}
\psfrag{Z}[Bc][Bc]{\scalebox{.7}{$Z$}} $\id_X=$ \rsdraw{.45}{.9}{identitymorph}\,,\quad $f=$ \rsdraw{.45}{.9}{morphism} ,\quad \text{and} \quad $gf=$ \rsdraw{.45}{.9}{morphismcompo}\,.
\end{center}
The monoidal product of two morphisms $f\co X \to Y$
and $g \co U \to V$ is represented by juxtaposition:
\begin{center}
\psfrag{X}[Bc][Bc]{\scalebox{.7}{$X$}} \psfrag{h}[Bc][Bc]{\scalebox{.8}{$f$}}
\psfrag{Y}[Bc][Bc]{\scalebox{.7}{$Y$}}  $f\otimes g=$ \rsdraw{.45}{.9}{morphism} \psfrag{X}[Bc][Bc]{\scalebox{.8}{$U$}} \psfrag{g}[Bc][Bc]{\scalebox{.8}{$g$}}
\psfrag{Y}[Bc][Bc]{\scalebox{.7}{$V$}} \rsdraw{.45}{.9}{morphism3}\,.
\end{center}
If an arc colored by $X$ is oriented upwards,
then the corresponding object   in the source/target of  morphisms
is $X^*$. For example, $\id_{X^*}$  and a morphism $f\co X^* \otimes
Y \to U \otimes V^* \otimes W$  may be depicted as:
\begin{center}
 $\id_{X^*}=$ \, \psfrag{X}[Bl][Bl]{\scalebox{.7}{$X$}}
\rsdraw{.45}{.9}{identitymorphdual} $=$  \,
\psfrag{X}[Bl][Bl]{\scalebox{.7}{$\ldual{X}$}}
\rsdraw{.45}{.9}{identitymorph2}  \quad and \quad
\psfrag{X}[Bc][Bc]{\scalebox{.7}{$X$}}
\psfrag{h}[Bc][Bc]{\scalebox{.8}{$f$}}
\psfrag{Y}[Bc][Bc]{\scalebox{.7}{$Y$}}
\psfrag{U}[Bc][Bc]{\scalebox{.7}{$U$}}
\psfrag{V}[Bc][Bc]{\scalebox{.7}{$V$}}
\psfrag{W}[Bc][Bc]{\scalebox{.7}{$W$}} $f=$
\rsdraw{.45}{.9}{morphism2} \,.
\end{center}
The duality morphisms   are depicted as follows:
\begin{center}
\psfrag{X}[Bc][Bc]{\scalebox{.7}{$X$}} $\lev_X=$ \rsdraw{.45}{.9}{leval}\,,\quad
 $\lcoev_X=$ \rsdraw{.45}{.9}{lcoeval}\,,\quad
$\rev_X=$ \rsdraw{.45}{.9}{reval}\,,\quad
\psfrag{C}[Bc][Bc]{\scalebox{.7}{$X$}} $\rcoev_X=$
\rsdraw{.45}{.9}{rcoeval}\,.
\end{center}
The dual of a morphism $f\co X \to Y$ and the
  traces of a morphism $g\co X \to X$ can be depicted as
follows:
\begin{center}
\psfrag{X}[Bc][Bc]{\scalebox{.7}{$X$}} \psfrag{h}[Bc][Bc]{\scalebox{.8}{$f$}}
\psfrag{Y}[Bc][Bc]{\scalebox{.7}{$Y$}} \psfrag{g}[Bc][Bc]{\scalebox{.8}{$g$}}
$f^*=$ \rsdraw{.45}{.9}{dualmorphism2}$=$ \rsdraw{.45}{.9}{dualmorphism}\quad \text{and} \quad
$\tr_l(g)=$ \rsdraw{.45}{.9}{ltrace}\,,\quad  $\tr_r(g)=$ \rsdraw{.45}{.9}{rtrace}\,.
\end{center}
  In a pivotal category, the morphisms represented by the diagrams
are invariant under isotopies of the diagrams in the plane keeping
fixed the bottom and   top endpoints.

\subsection{Spherical categories}\label{sect-spherical}
  A \emph{spherical category} is a pivotal category whose left and
right traces are equal, i.e.,  $\tr_l(g)=\tr_r(g)$ for every
endomorphism $g$ of an object. Then $\tr_l(g)$ and $ \tr_r(g)$ are
denoted $\tr(g)$ and called the \emph{trace of $g$}. Similarly, the
left and right dimensions of an object~$X$ are denoted   $\dim(X)$
and called the \emph{dimension of $X$}.

Note that sphericity can be interpreted in graphical terms: it means that
the morphisms represented by
closed diagrams are invariant under isotopies of   diagrams in the 2-sphere $S^2=\RR^2\cup
\{\infty\}$, i.e., are preserved under isotopies pushing   arcs of
the diagrams across~$\infty$.


\subsection{Additive categories}
Let $\kk$ be a commutative ring. A \emph{\kt  additive category} is a category where
$\Hom$-sets are \kt modules,  the  composition of morphisms is \kt bilinear, and any finite family of objects has a direct sum. In particular, such a category has a zero object.

An object $X$ of a $\kk$-additive category $\cc$ is \emph{scalar} if
the map $\kk \to \End_\cc(X)$, $\alpha \mapsto \alpha\,\id_X$ is bijective.

A  \emph{$\kk$-additive monoidal category} is a monoidal category which is $\kk$-additive in such a way that the monoidal product
is \kt bilinear. Note that a $\kk$-additive monoidal category whose unit object $\un$ is scalar satisfies \eqref{special} and so  its traces $\tr_l,\tr_r$ are  \kt linear and $\otimes$-multiplicative.

\subsection{Fusion categories}\label{sphesphe}
A \emph{fusion category} over a commutative ring $\kk$ is a \kt additive rigid category $\cc$ such that
\begin{enumerate}
\labela
\item each object of $\cc$ is a finite direct sum of scalar objects;
\item for any non-isomorphic scalar objects $i,j  $ of $\cc$, we have $\Hom_\cc(i,j)=0$;
\item the set of isomorphism classes of scalar objects of~$\cc$   is finite;
\item the unit object $\un$ is scalar.
\end{enumerate}

Let $\cc$ be a fusion category.  The $\Hom$ spaces in
$\cc$ are free \kt modules of finite rank. We identify $\End_\cc(\un)$ with $\kk$ via the canonical isomorphism.
Given a scalar object $i$ of $\cc$, the \emph{$i$-isotypical component} $X^{(i)}$
 of an object $X$ is the largest direct factor of $X$ isomorphic to a direct sum of copies of $i$. The actual number of copies of $i$ is $$\nu_i(X)=\rank_\kk \,\Hom_\cc(i,X)=\rank_\kk \,\Hom_\cc(X,i).$$ An \emph{$i$-decomposition} of $X$ is an explicit direct sum decomposition of $X^{(i)}$ into copies of~$i$, that is, a family $(p_\alpha\co X \to i, q_\alpha \co i \to X)_{\alpha \in A}$ of pairs of morphisms  in $\cc$ such that
\begin{enumerate}
\labela
\item $p_\alpha \,q_\beta = \delta_{\alpha,\beta}\,\id_i$ for all $\alpha,\beta \in A$,
\item the set $A$ has $\nu_i(X)$ elements,
\end{enumerate}
where $\delta_{\alpha,\beta}$ is the Kronecker symbol.

A \emph{representative set of scalar objects} of  $\cc$ is a set~$I$ of scalar objects such that $\un \in I$ and every
scalar object of $\cc$ is isomorphic to exactly one element of~$I$.

Note that if $\kk$ is a field, a fusion category over $\kk$ is abelian  and semisimple.  Recall that an abelian category is \emph{semisimple} if its objects are direct sums of simple\footnote{An object of an abelian category is \emph{simple} if it is non-zero and has no other subobject than the zero object and itself.} objects.

A pivotal fusion category is spherical (see Section~\ref{sect-spherical}) if and only if the left and right dimension of any of its scalar objects coincide.

\subsection{Graphical calculus in pivotal fusion categories}
Let $\cc$ be a pivotal fusion category. Let $X$ be an object of $\cc$ and $i$ be a scalar object of $\cc$. Then the tensor
\begin{equation*}
\sum_{\alpha \in A} p_\alpha \otimes_\kk q_\alpha \in \Hom_\cc(X,i) \otimes_\kk \Hom_\cc(i,X),
\end{equation*}
where $( p_\alpha  , q_\alpha )_{\alpha \in A}$ is an $i$-decomposition of $X$, does not depend on the choice of the $i$-decomposition $( p_\alpha  , q_\alpha )_{\alpha \in A}$ of $X$.
Consequently, a sum of the type
\begin{equation*}
\psfrag{p}[Bc][Bc]{\scalebox{1}{$p_\alpha$}}
\psfrag{q}[Bc][Bc]{\scalebox{1}{$q_\alpha$}}
\psfrag{X}[Bl][Bl]{\scalebox{.9}{$X$}}
\psfrag{i}[Bl][Bl]{\scalebox{.9}{$i$}}
\sum_{\alpha \in A} \; \rsdraw{.45}{.9}{tensor0}\,,
\end{equation*}
where $( p_\alpha  , q_\alpha )_{\alpha \in A}$ is an $i$-decomposition of an object $X$ and the gray area does not involve $\alpha$, represents a morphism in $\cc$ which is independent of the choice of the $i$-decomposition. We depict it as
\begin{equation}\label{eq-depict-tensor}
\psfrag{X}[Bl][Bl]{\scalebox{.9}{$X$}}
\psfrag{i}[Bl][Bl]{\scalebox{.9}{$i$}}
\phantom{\sum_{\alpha \in A}} \;\rsdraw{.45}{.9}{tensor1e}\,,
\end{equation}
where the two   curvilinear boxes should be shaded with the same
color. If several such pairs of boxes appear in a picture, they must
have different colors. We will also depict 
\begin{equation*}
\psfrag{e}[Br][Br]{\scalebox{.9}{$i$}}
\psfrag{R}[Br][Br]{\scalebox{.9}{$X$}}
\psfrag{X}[Bl][Bl]{\scalebox{.9}{$X$}}
\psfrag{i}[Bl][Bl]{\scalebox{.9}{$i$}}
\rsdraw{.45}{.9}{tensor2a} \quad \text{as} \quad \rsdraw{.45}{.9}{tensor2b}\,.
\end{equation*}
As usual, the edges labeled with  $i=\un$ may be erased and then \eqref{eq-depict-tensor} becomes
\begin{equation*}
\psfrag{X}[Bl][Bl]{\scalebox{.9}{$X$}}
\psfrag{i}[Bl][Bl]{\scalebox{.9}{$i$}}
\phantom{\sum_{\alpha \in A}} \;\rsdraw{.45}{.9}{tensor1un}\,.
\end{equation*}
Note also that   tensor products of objects may be depicted as
bunches of   strands. For example,
\begin{equation*}
\psfrag{Z}[Bl][Bl]{\scalebox{.9}{$X^* \otimes Y \otimes Z^*$}}
\psfrag{Y}[Bl][Bl]{\scalebox{.9}{$Y$}}
\psfrag{T}[Bl][Bl]{\scalebox{.9}{$Z$}}
\psfrag{X}[Br][Br]{\scalebox{.9}{$X$}}
\psfrag{i}[Bl][Bl]{\scalebox{.9}{$i$}}
\rsdraw{.45}{.9}{tensormult1} \qquad  \qquad \;= \; \rsdraw{.45}{.9}{tensormult2} \qquad \, \text{and} \, \qquad
\rsdraw{.45}{.9}{tensormult3} \qquad \qquad \; = \; \rsdraw{.45}{.9}{tensormult4}\;
\end{equation*}
where the equality sign means that the pictures represent the same
morphism of $\cc$.

\subsection{Braided and ribbon categories}

 A \emph{braiding} in a monoidal category $\bb$ is a natural isomorphism
$ \tau=\{\tau_{X,Y} \co  X \otimes Y\to Y \otimes X\}_{X,Y \in
\bb} $ such that
\begin{equation*}
\tau_{X, Y\otimes Z}=(\id_Y \otimes \tau_{X, Z})(\tau_{X, Y} \otimes \id_Z) \quad \text{and} \quad
\tau_{X\otimes Y,Z}=(\tau_{X,Z} \otimes \id_Y)(\id_X \otimes \tau_{Y,Z})
\end{equation*}
for all $X,Y,Z$ objects of $\cc$. These conditions imply that $\tau_{X,\un}=\tau_{\un,X}=\id_X$.

A monoidal category endowed with a braiding is said
to be \emph{braided}. The braiding    and its inverse are depicted
as
$$
\psfrag{X}[Bc][Bc]{\scalebox{.8}{$X$}}
\psfrag{Y}[Bc][Bc]{\scalebox{.8}{$Y$}}
\tau_{X,Y}=\,\rsdraw{.45}{.9}{braidingb} \quad \text{and} \quad
\tau^{-1}_{Y,X}=\,\rsdraw{.45}{.9}{braidinginvb} \, .
$$
Note that any braided category satisfies the condition \eqref{special} of Section~\ref{sect-trace}.

For any object $X$ of a braided pivotal category
$\bb$, the morphism
$$
\theta_X  =
\psfrag{X}[Br][Br]{\scalebox{.8}{$X$}}\rsdraw{.45}{.9}{theta1}\,=(\id_X
\otimes \rev_X)(\tau_{X,X} \otimes \id_{X^*})(\id_X \otimes
\lcoev_X) \co X\to X,
$$
is called the \emph{twist}. The twist is natural in $X$ and  invertible, with inverse
$$
\theta_X^{-1}=\psfrag{X}[Bl][Bl]{\scalebox{.8}{$X$}}\rsdraw{.45}{.9}{theta2}=(\lev_X \otimes \id_X)(\id_{X^*} \otimes \tau_{X,X}^{-1})(\rcoev_X \otimes \id_X)\co X \to X.
$$
It satisfies
$\theta_{X\otimes Y}=(\theta_X \otimes \theta_Y)\tau_{Y,X}\tau_{X,Y}$ for  all objects  $X,Y$ of $\bb$ and
 $\theta_\un=\id_\un$.

A \emph{ribbon category} is a braided pivotal category $\bb$ whose
twist $\theta$ is self-dual, i.e., $(\theta_X)^*=\theta_{X^*}$ for any object $X$ of $\bb$. This  is equivalent to the equality:
$$
\psfrag{X}[Br][Br]{\scalebox{.8}{$X$}} \rsdraw{.45}{.9}{theta1}
=\psfrag{X}[Bl][Bl]{\scalebox{.8}{$X$}}\rsdraw{.45}{.9}{theta2inv}\,.
$$
A ribbon category  is spherical.

\subsection{The center of a  monoidal category}\label{sect-centerusual}
Let $\cc$ be a monoidal category. A \emph{half braiding} of $\cc$ is
a pair $({{A}},\sigma)$, where ${{A}}$ is an object of $\cc$ and
\begin{equation*}
\sigma=\{\sigma_X \co  {{A}} \otimes X\to X \otimes {{A}}\}_{X \in \cc}
\end{equation*}
is a natural isomorphism such that
 \begin{equation}\label{axiom-half-braiding}  \sigma_{X \otimes Y}=(\id_X \otimes
\sigma_Y)(\sigma_X \otimes \id_Y)\end{equation} for all
$X,Y$ objects of $\cc$. This implies that $\sigma_\un=\id_{{A}}$.

The \emph{center of $\cc$} is the braided category $\zz(\cc)$
defined as follows. The objects of~$\zz(\cc)$ are half braidings of
$\cc$. A morphism $({{A}},\sigma)\to ({{A}}',\sigma')$ in $\zz(\cc)$ is a
morphism $f \co {{A}} \to {{A}}'$ in $\cc$ such that $(\id_X \otimes
f)\sigma_X=\sigma'_X(f \otimes \id_X)$ for any object $X$ of $\cc$. The
  unit object of $\zz(\cc)$ is $\un_{\zz(\cc)}=(\un,\{\id_X\}_{X \in
\cc})$ and the monoidal product is
\begin{equation*}
({{A}},\sigma) \otimes ({{B}}, \rho)=\bigl({{A}} \otimes {{B}},(\sigma \otimes \id_{{B}})(\id_{{A}} \otimes \rho) \bigr).
\end{equation*}
The braiding $\tau$ in $\zz(\cc)$ is defined by
$$\tau_{({{A}},\sigma),({{B}}, \rho)}=\sigma_{{{B}}}\co ({{A}},\sigma) \otimes
({{B}}, \rho) \to ({{B}}, \rho) \otimes ({{A}},\sigma).$$

There is a  {\it forgetful functor} $\uu\co\zz(\cc)\to \cc$ assigning to
every half braiding $({{A}}, \sigma)$ the underlying object ${{A}}$ and
acting in the obvious way on the morphisms. This is a strict monoidal functor.

If $\cc$ satisfies \eqref{special}, then $\End_{\zz(\cc)}(\un_{\zz(\cc)})=\End_\cc(\un)$.

If $\cc$ is rigid, then  so is $\zz(\cc)$. If $\cc$ is pivotal, then so is $\zz(\cc)$ with
$({{A}},\sigma)^*=({{A}}^*,\sigma^\natural)$, where
$$
 \psfrag{M}[Bc][Bc]{\scalebox{.9}{${{A}}$}}
 \psfrag{X}[Bc][Bc]{\scalebox{.9}{$X$}}
 \psfrag{s}[Bc][Bc]{\scalebox{.9}{$\sigma_{X^*}$}}
\sigma^\natural_X= \rsdraw{.45}{1}{sigmadual} \co {{A}}^* \otimes X \to X \otimes {{A}}^*,
$$
and $\lev_{({{A}},\sigma)}= \lev_{{A}}$, $\lcoev_{({{A}},\sigma)}= \lcoev_{{A}}$,
$\rev_{({{A}},\sigma)}= \rev_{{A}}$, $\rcoev_{({{A}},\sigma)}= \rcoev_{{A}}$.
In that case the forgetful functor $\uu$ preserves (left and right) traces of morphisms and dimensions of objects.

If $\cc$ is a \kt additive monoidal category, then so is $\zz(\cc)$ and
the forgetful functor is \kt linear. If $\cc$ is  an abelian rigid category, then so is $\zz(\cc)$, and the forgetful functor  is exact.

If $\cc$ is a fusion category over the ring $\kk$, then $\zz(\cc)$ is braided \kt additive rigid category whose monoidal unit is scalar. If in addition $\kk$ is field,
then $\cc$ is abelian, and so is $\zz(\cc)$.

\section{Main results}\label{main-results}
In this section, we state our main results concerning the center of a pivotal fusion category. They are proved in Section~\ref{sect-proofs}. Let $\cc$ be a pivotal fusion category over a commutative ring $\kk$  and $I$ be a representative set of scalar objects of $\cc$. Recall from Section~\ref{sect-centerusual} that the center $\zz(\cc)$ of $\cc$ is a braided \kt additive pivotal category whose monoidal unit is scalar.

\begin{figure}[t]
   \begin{center}
\subfigure[The coproduct $\Delta\co C \to C \otimes C$]{
$\displaystyle
\Delta=
 \psfrag{i}[Bl][Bl]{\scalebox{.85}{$i$}}
 \psfrag{j}[Bl][Bl]{\scalebox{.85}{$j$}}
 \psfrag{k}[Bl][Bl]{\scalebox{.85}{$k$}}
 \psfrag{l}[Bl][Bl]{\scalebox{.85}{$\ell$}}
 \psfrag{n}[Bl][Bl]{\scalebox{.85}{$n$}}
 \!\!\!\sum_{i,j,k,\ell,n\in I} \;\; \rsdraw{.50}{.9}{C-coproduct2}
$}\\
\subfigure[The product $m\co C \otimes C \to C$]{
$\displaystyle
m=
 \psfrag{i}[Bl][Bl]{\scalebox{.85}{$i$}}
 \psfrag{j}[Bl][Bl]{\scalebox{.85}{$j$}}
 \psfrag{k}[Bl][Bl]{\scalebox{.85}{$k$}}
 \psfrag{l}[Bl][Bl]{\scalebox{.85}{$\ell$}}
 \psfrag{c}[Br][Br]{\scalebox{.85}{$a$}}
 \psfrag{b}[Br][Br]{\scalebox{.85}{$\ell$}}
 \psfrag{a}[Bl][Bl]{\scalebox{.85}{$n$}}
 \!\!\sum_{i,j,k,\ell,n,a\in I} \;\;\; \rsdraw{.50}{.9}{C-product}
$}\\
\setlength{\subfigcapskip}{5pt}
\subfigure[The counit $\varepsilon\co C \to \un$]{\phantom{XX}
$\displaystyle
\varepsilon=
 \psfrag{j}[Bl][Bl]{\scalebox{.85}{$j$}}
 \sum_{j\in I} \; \rsdraw{.10}{.9}{C-counit}
$\phantom{XX}}\quad
\subfigure[The unit $u\co \un \to C$]{\phantom{XX}
$\displaystyle
u=
 \psfrag{i}[Bl][Bl]{\scalebox{.85}{$i$}}
 \sum_{i\in I} \; \;\rsdraw{.5}{.9}{C-unit}
$\phantom{XX}}\\
\setlength{\subfigcapskip}{10pt}
\subfigure[The antipode $S\co C \to C$]{
$\displaystyle
S=
 \psfrag{i}[Bl][Bl]{\scalebox{.85}{$i$}}
 \psfrag{j}[Bl][Bl]{\scalebox{.85}{$j$}}
 \psfrag{k}[Bl][Bl]{\scalebox{.85}{$k$}}
 \psfrag{l}[Bl][Bl]{\scalebox{.85}{$\ell$}}
 \psfrag{n}[Bl][Bl]{\scalebox{.85}{$n$}}
 \!\!\!\sum_{i,j,k,\ell,n\in I} \; \;\rsdraw{.50}{.9}{C-antip}
$}\\
\subfigure[The canonical pairing $\omega\co C \otimes C \to \un$]{
$\displaystyle
\omega=
 \psfrag{i}[Bl][Bl]{\scalebox{.85}{$i$}}
 \psfrag{j}[Bl][Bl]{\scalebox{.85}{$j$}}
 \psfrag{k}[Bl][Bl]{\scalebox{.85}{$k$}}
 \psfrag{l}[Bl][Bl]{\scalebox{.85}{$\ell$}}
 \psfrag{n}[Bl][Bl]{\scalebox{.85}{$n$}}
 \!\sum_{i,j,k,\ell\in I} \;\; \rsdraw{.50}{.9}{C-w}
$}
   \end{center}
   \caption{Structural morphisms of the coend of $\zz(\cc)$}
     \label{fig-struct-coend}
\end{figure}

The coend of a rigid braided category is, if it exists, a Hopf algebra in the category which coacts universally on the objects (see  Section~\ref{sect-coend-category} for details). The center $\zz(\cc)$ of $\cc$ has a coend $(C,\sigma)$, where
$$
C=\bigoplus_{i,j \in I} i^* \otimes j^* \otimes i \otimes j
$$
and the half braiding $\sigma=\{\sigma_Y\}_{Y \in \cc}$ is given by
\begin{equation}\label{eq-half-braiding-coend-ZC}
\sigma_Y=\psfrag{i}[Bl][Bl]{\scalebox{.85}{$i$}}
 \psfrag{j}[Bl][Bl]{\scalebox{.85}{$j$}}
 \psfrag{k}[Bl][Bl]{\scalebox{.85}{$k$}}
 \psfrag{l}[Bl][Bl]{\scalebox{.85}{$\ell$}}
 \psfrag{n}[Bl][Bl]{\scalebox{.85}{$n$}}
 \psfrag{X}[Bl][Bl]{\scalebox{.85}{$Y$}}
 \!\!\!\sum_{i,j,k,\ell,n\in I}  \rsdraw{.50}{.9}{C-sigma} \; \co C \otimes Y \to Y \otimes C.
\end{equation}
The universal coaction  $\delta=\{\delta_{M,\gamma}\}_{(M,\gamma) \in \zz(\cc)}$ of the coend $(C,\sigma)$ is
\begin{equation}\label{eq-coaction-coend-ZC}
\delta_{(M,\gamma)}=
\psfrag{i}[Bl][Bl]{\scalebox{.85}{$i$}}
 \psfrag{j}[Bl][Bl]{\scalebox{.85}{$j$}}
 \psfrag{Y}[Br][Br]{\scalebox{.85}{$M$}}
 \psfrag{u}[cc][cc]{\scalebox{1}{$\gamma_i$}}
 \psfrag{X}[Bl][Bl]{\scalebox{.85}{$M$}}
\sum_{i,j\in I} \;\;\rsdraw{.45}{.9}{C-coaction}\;\co (M,\gamma) \to (M,\gamma) \otimes (C,\sigma).
\end{equation}
The  structural morphisms and the canonical pairing of the Hopf algebra $(C,\sigma)$ are depicted  in Figure~\ref{fig-struct-coend},  where the dotted lines in the picture represent $\id_\un$ and serve to indicate which direct factor of $C$ is concerned. Moreover
\begin{equation}\label{eq-def-Lambda}
\Lambda=\sum_{j \in I} \, \dim_r(j) \,  \psfrag{i}[Bl][Bl]{\scalebox{.85}{$j$}}
 \;\rsdraw{.5}{.9}{C-Lambda} \; \co (\un,\id) \to (C,\sigma)
\end{equation}
is an integral of the Hopf algebra $(C,\sigma)$, which is invariant under the antipode.

By a modular category, we mean a braided pivotal category admitting a coend, and whose canonical pairing is non degenerate (see  Section~\ref{sect-modular-def} for details). The dimension of such a category is the dimension of its coend (see  Section~\ref{sect-dim-def}).

\begin{thm}\label{thm-center-modular}
The center $\zz(\cc)$ of $\cc$ is modular and has  dimension $\dim(\cc)^2$.
\end{thm}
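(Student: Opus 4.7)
The theorem contains two independent claims about $\zz(\cc)$: the dimension formula and the non-degeneracy of the canonical pairing $\omega$ of its coend $(C,\sigma)$.

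The dimension formula is a direct computation from the explicit description of $C$. Since the forgetful functor $\uu\co\zz(\cc)\to\cc$ preserves left and right dimensions, $\dim_l((C,\sigma))=\dim_l(C)$ in $\cc$. Using additivity of $\dim_l$ on direct sums, its $\otimes$-multiplicativity (which holds since $\un$ is scalar), and $\dim_l(i^*)=\dim_r(i)$, the decomposition $C=\bigoplus_{i,j\in I} i^*\otimes j^*\otimes i\otimes j$ gives
\[
\dim_l(C) \;=\; \sum_{i,j\in I}\dim_l(i)\dim_r(i)\dim_l(j)\dim_r(j) \;=\; \Bigl(\sum_{i\in I}\dim_l(i)\dim_r(i)\Bigr)^{\!2} \;=\; \dim(\cc)^{2}.
\]

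For modularity, the task is to produce an explicit morphism $\Omega\co\un\to C\otimes C$ satisfying $(\omega\otimes\id_C)(\id_C\otimes\Omega)=\id_C=(\id_C\otimes\omega)(\Omega\otimes\id_C)$. The natural candidate is built from the integral $\Lambda$ of \eqref{eq-def-Lambda} and the Hopf structure of $(C,\sigma)$ displayed in Figure~\ref{fig-struct-coend}: heuristically, $\Omega$ should be a sum over pairs $(i,j)\in I\times I$ of ``diagonal'' morphisms into $(i^*\otimes j^*\otimes i\otimes j)\otimes(i^*\otimes j^*\otimes i\otimes j)$, weighted by the dimension factors $\dim_r(i)\dim_r(j)$ that already appear in $\Lambda$. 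Since the statement of the theorem involves \emph{no} invertibility assumption on $\dim(\cc)\in\kk$, the construction of $\Omega$ must be free of any denominators.

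The verification that $\Omega$ inverts $\omega$ should proceed through a preliminary \emph{handleslide} identity: informally, the coend $(C,\sigma)$ together with $\Lambda$ provides a ``handle'' across which any morphism of $\zz(\cc)$ can be slid, in analogy with the Kirby move. Once this is in place, the composite $(\omega\otimes\id_C)(\id_C\otimes\Omega)$ can be rewritten graphically so that a $\Lambda$-strand absorbs the inner part of the picture, leaving a sum over scalar objects $i\in I$ of elementary loops. These loops are then evaluated by the fusion-categorical Schur orthogonality ($\Hom_\cc(i,j)=0$ for non-isomorphic $i,j\in I$ together with $\End_\cc(i)=\kk$), and the sum collapses to $\id_C$. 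The analogous calculation on the other side gives the second inverse equation.

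The main obstacle will be establishing the handleslide identity in the present generality, since $\kk$ is only a commutative ring and $\cc$ need not be abelian or semisimple: the argument has to be carried out at the level of the explicit scalar-object decomposition and the structural morphisms of Figure~\ref{fig-struct-coend}, rather than by invoking abstract semisimplicity or any dimension-counting over a field. Once handleslide is secured, both the non-degeneracy of $\omega$ and hence the modularity of $\zz(\cc)$ follow by routine graphical bookkeeping, and the dimension check above completes the proof.
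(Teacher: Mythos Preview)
Your plan is correct and close to the paper's proof: both compute the dimension directly from $C=\bigoplus_{i,j}i^*\otimes j^*\otimes i\otimes j$, and both establish non-degeneracy of $\omega$ by building an inverse out of the integral $\Lambda$ of \eqref{eq-def-Lambda}, with the handleslide identities of Lemma~\ref{firstlemmagraphic} doing the real work.

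The one organisational difference worth noting is this. Rather than writing down a candidate $\Omega$ and verifying the two inverse equations directly, the paper first isolates an abstract criterion (Lemma~\ref{lem-non-degen}): if $\omega(\Lambda\otimes\id_C)$ and $\omega(\id_C\otimes\Lambda')$ are left cointegrals and $\omega(\Lambda\otimes\Lambda')$ is invertible, then $\omega$ is non-degenerate, with $\Omega$ given by an explicit Hopf-algebraic formula in $S$, $\Delta$, $\Lambda$, $\Lambda'$. The graphical labour then reduces to three concrete checks with $\Lambda'=\Lambda$: that $\lambda:=\omega(\Lambda\otimes\id_C)=\omega(\id_C\otimes\Lambda)$, that $\lambda$ is a left cointegral, and that $\omega(\Lambda\otimes\Lambda)=\lambda\Lambda=1$. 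This last equality is exactly the reason no denominators appear, answering the point you raise; your vaguer description of $\Omega$ as ``diagonal morphisms weighted by $\dim_r(i)\dim_r(j)$'' would need to be sharpened to the formula of Lemma~\ref{lem-non-degen} (which involves $S$ and $\Delta$) before the verification could be carried out. Your direct route would ultimately reproduce the proof of that lemma inside the specific computation, so the paper's factorisation is a cleaner packaging of the same argument.
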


The forgetful functor $\uu \co \zz(\cc) \to \cc$ has a left adjoint $\ff \co \cc \to \zz(\cc)$. For an object $X$ of $\cc$,
\begin{gather*}
\ff(X)=\bigl(Z(X),\varsigma_X=\{\varsigma_{X,Y}\}_{Y \in \cc}\bigr) \quad \text{where} \quad Z(X)=\bigoplus_{i \in I} i^* \otimes X \otimes i \quad \text{and}\\[-.7em]
 \varsigma_{X,Y}=\sum_{i,j \in I}\;\,
 \psfrag{i}[Bl][Bl]{\scalebox{.85}{$i$}}
 \psfrag{j}[Bl][Bl]{\scalebox{.85}{$j$}}
 \psfrag{Y}[Bl][Bl]{\scalebox{.85}{$Y$}}
 \psfrag{X}[Bl][Bl]{\scalebox{.85}{$X$}}
\rsdraw{.5}{.9}{Z-free}\, \co Z(X) \otimes Y \to Y \otimes Z(X).
\end{gather*}
For a morphism $f$ in $\cc$,
$$ \ff(f)=\sum_{i \in I} \id_{i^*} \otimes f \otimes \id_i.$$
By a \emph{free half braiding}, we mean an half braiding of the form $\ff(X)$ for some object $X$ of $\cc$.

\begin{thm}\label{thm-center-semisimple}
The dimension of $\cc$ is invertible in $\kk$ if and only if every half braiding is a retract of a free half braiding.
\end{thm}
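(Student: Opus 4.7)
The plan is to reformulate the retract condition via the counit $\varepsilon\co \ff\uu \to \id_{\zz(\cc)}$ of the adjunction $\ff \dashv \uu$, and then to analyse the $\kk$-module $\Hom_{\zz(\cc)}(\un_{\zz(\cc)},\ff(\un))$. The starting observation is that a half-braiding $(M,\gamma)$ is a retract of some free $\ff(X)$ if and only if $\varepsilon_{(M,\gamma)}\co \ff(M) \to (M,\gamma)$ admits a section in $\zz(\cc)$: given any retraction off $\ff(X)$ with projection $p$ and section $t$, the adjunction writes $p = \varepsilon_{(M,\gamma)} \circ \ff(\bar p)$ for some $\bar p \co X \to M$, so $\ff(\bar p)\circ t$ splits $\varepsilon_{(M,\gamma)}$.

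For the direct implication, assume $\dim(\cc)\in \kk^\times$. For each half-braiding $(M,\gamma)$ I would set
$$
s_{(M,\gamma)} = \frac{1}{\dim(\cc)}\sum_{i\in I}\dim_r(i)\,(\id_{i^*}\otimes \gamma_i^{-1})(\rcoev_i\otimes \id_M)\co M \to Z(M).
$$
Two things need to be checked. Using $\varepsilon_{(M,\gamma)}|_{i^*\otimes M\otimes i} = (\lev_i\otimes \id_M)(\id_{i^*}\otimes \gamma_i)$, the $\gamma_i$ cancels against $\gamma_i^{-1}$, and the identities $\lev_i\rcoev_i = \dim_l(i)$ together with $\sum_i\dim_r(i)\dim_l(i) = \dim(\cc)$ give $\varepsilon_{(M,\gamma)}\,s_{(M,\gamma)} = \id_M$. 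Second, $s_{(M,\gamma)}$ must intertwine the half-braidings, that is, $(\id_Y\otimes s_{(M,\gamma)})\gamma_Y = \varsigma_{M,Y}(s_{(M,\gamma)}\otimes \id_Y)$ for every $Y\in\cc$. This is a graphical \emph{handleslide}: expanding $\varsigma_{M,Y}$ with the explicit formula for the free half-braiding and using an $\ell$-decomposition of $Y\otimes i$, both sides reduce to the same sum over $I$.

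For the converse, apply the hypothesis to $\un_{\zz(\cc)}$ to obtain, by the preliminary reduction, a morphism $s\co \un_{\zz(\cc)}\to \ff(\un)$ in $\zz(\cc)$ with $\varepsilon_{\un_{\zz(\cc)}}\,s = \id_\un$. Since $\un$ is scalar and $\Hom_\cc(\un, i^*\otimes i) = \kk\,\rcoev_i$ for each $i \in I$, we may write $s = \sum_{i\in I}\lambda_i\,\rcoev_i$ in $\cc$, and the splitting relation becomes $\sum_i \lambda_i\dim_l(i) = 1$. The crucial structural input is that $\Hom_{\zz(\cc)}(\un_{\zz(\cc)},\ff(\un))$ is a free $\kk$-module of rank one generated by $\Lambda_0 = \sum_{i\in I}\dim_r(i)\,\rcoev_i$. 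The membership $\Lambda_0 \in \Hom_{\zz(\cc)}(\un_{\zz(\cc)},\ff(\un))$ is the same handleslide identity as above specialized to $(M,\gamma) = \un_{\zz(\cc)}$, and uses no hypothesis on $\dim(\cc)$. The rank-one claim follows from pivotal duality in $\zz(\cc)$, the canonical isomorphism $\ff(X)^*\cong \ff(X^*)$ of half-braidings (which holds since $Z(X)^*\cong Z(X^*)$ via $i^{**}\cong i$ and the dual half-braiding $\varsigma^\natural$ matches $\varsigma_{X^*}$), and the adjunction $\ff\dashv\uu$:
$$
\Hom_{\zz(\cc)}(\un_{\zz(\cc)},\ff(\un)) \cong \Hom_{\zz(\cc)}(\ff(\un)^*,\un_{\zz(\cc)}) \cong \Hom_{\zz(\cc)}(\ff(\un),\un_{\zz(\cc)}) \cong \Hom_\cc(\un,\un) = \kk,
$$
and tracking the generator $1\in\kk$ through these isomorphisms produces $\Lambda_0$. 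Hence $s = c\,\Lambda_0$, so $\lambda_i = c\,\dim_r(i)$ and the normalization gives $c\,\dim(\cc) = 1$; in particular $\dim(\cc)$ is invertible with inverse $c$.

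The technical heart of the argument is the handleslide identity, used to show both that $s_{(M,\gamma)}$ is a morphism in $\zz(\cc)$ in the direct implication and that $\Lambda_0$ is centralised in the converse. This is a diagrammatic manipulation in $\cc$ based on the fusion decomposition of $Y\otimes i$ and the explicit form of the free half-braiding $\varsigma$; it is of the same nature as the computations establishing the Hopf algebra structure of the coend $(C,\sigma)$ depicted in Figure~\ref{fig-struct-coend}.
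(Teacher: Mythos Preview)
Your argument is correct and close in spirit to the paper's, but packaged differently. The paper identifies $\zz(\cc)$ with the category $\cc^Z$ of modules over the Hopf monad $Z$ and then invokes Maschke's criterion for Hopf monads \cite[Theorem~6.5]{BV2} as a black box: $Z$ is semisimple if and only if there exists $\alpha\co\un\to Z(\un)$ with $\mu_\un Z(\alpha)=\alpha Z_0$ and $Z_0\alpha=1$. The first condition is exactly the $Z$-linearity of $\alpha$ as a morphism $(\un,Z_0)\to(Z(\un),\mu_\un)$, i.e.\ your condition that $\alpha\in\Hom_{\zz(\cc)}(\un_{\zz(\cc)},\ff(\un))$, and the second is your splitting condition $\varepsilon_{\un_{\zz(\cc)}}\alpha=1$. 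The paper then computes directly, via the handleslide Lemma~\ref{firstlemmagraphic}(b), that the first condition forces $\alpha_i=\alpha_\un\dim_r(i)$, i.e.\ $\alpha=\alpha_\un\Lambda_0$, whence the second becomes $\alpha_\un\dim(\cc)=1$.

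What you do differently: for the direct implication you construct an explicit section $s_{(M,\gamma)}$ for every half-braiding rather than appealing to Maschke; this is more self-contained but requires the $(M,\gamma)$-version of the handleslide to verify that $s_{(M,\gamma)}$ intertwines $\gamma$ with $\varsigma_M$. For the converse, your duality/adjunction chain is an elegant alternative to the paper's direct computation, but two points need care. First, the isomorphism $\ff(\un)^*\cong\ff(\un)$ must be established at the level of half-braidings (that $\varsigma_\un^\natural$ matches $\varsigma_\un$ under $Z(\un)^*\cong Z(\un)$), since $\ff$ is only a left adjoint, not strong monoidal. Second, over a general commutative ring $\kk$, knowing that the Hom is free of rank one and that $\Lambda_0$ lies in it does not by itself make $\Lambda_0$ a generator; you genuinely need the ``tracking'' step (or, equivalently, the paper's direct handleslide computation of the constraint $\lambda_i=c\dim_r(i)$) to conclude. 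Both routes ultimately rest on the same handleslide identity; yours avoids importing Hopf-monad machinery, while the paper's avoids redoing the intertwining check for general $(M,\gamma)$.
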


From Section~\ref{sect-centerusual}, if $\kk$ is a field, then $\zz(\cc)$ is abelian.

\begin{cor}\label{cor-center-fusion}
Assume $\kk$ is a field. Then
\begin{enumerate}
\labela
\item The center $\zz(\cc)$ is semisimple (as an abelian category) if and only if $\dim(\cc) \neq 0$.
\item Assume $\kk$ is algebraically closed. Then $\zz(\cc)$
is a fusion category if and only if $\dim(\cc) \neq 0$.
\end{enumerate}
\end{cor}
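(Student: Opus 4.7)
The plan is to derive both parts from Theorem~\ref{thm-center-semisimple} (which, since $\kk$ is a field, reduces to: $\dim(\cc) \neq 0$ if and only if every half braiding is a retract of a free one), together with the adjunction $\ff \dashv \uu$ and standard properties of semisimple abelian categories.

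For the ``only if'' direction of~(a), I would use that the forgetful functor $\uu$ is faithful, so the counit $\ff\uu(M,\gamma) \to (M,\gamma)$ is an epimorphism in $\zz(\cc)$ for every half braiding $(M,\gamma)$. In a semisimple abelian category every epimorphism splits, so $(M,\gamma)$ is a retract of the free half braiding $\ff(M) = \ff\uu(M,\gamma)$. Theorem~\ref{thm-center-semisimple} then gives that $\dim(\cc)$ is invertible in $\kk$, hence nonzero.

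For the ``if'' direction of~(a), I would observe that $\uu$ is exact (kernels and cokernels in $\zz(\cc)$ being computed in $\cc$), so its left adjoint $\ff$ sends projectives to projectives. The fusion category $\cc$ is semisimple abelian, so every $X \in \cc$ is projective, and hence $\ff(X)$ is projective in $\zz(\cc)$ for all $X$. By Theorem~\ref{thm-center-semisimple}, every object of $\zz(\cc)$ is a retract of some $\ff(X)$, hence projective itself, so every short exact sequence in $\zz(\cc)$ splits. To upgrade this to semisimplicity, I would use that the $\Hom$-spaces in $\zz(\cc)$ are finite-dimensional---each $\End_{\zz(\cc)}(\ff(i)) \cong \Hom_\cc(i, Z(i))$ is finite-dimensional, and every object is a retract of some $\bigoplus_{i \in I} \ff(i)^{\oplus n_i}$ (using the decomposition of $X$ into scalars of $\cc$)---which gives Krull--Schmidt decompositions. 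Combined with the splitting of all short exact sequences, every indecomposable object becomes simple (a proper nonzero subobject would split off, contradicting indecomposability), so every object is a direct sum of simples.

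For~(b), one direction is immediate: a fusion category is by definition semisimple abelian, so~(a) gives $\dim(\cc) \neq 0$. Conversely, assuming $\dim(\cc) \neq 0$ and $\kk$ algebraically closed, (a) yields that $\zz(\cc)$ is semisimple abelian, and Section~\ref{sect-centerusual} provides $\kk$-additivity, rigidity, and a scalar unit. Over the algebraically closed field $\kk$, Schur's lemma together with the finite-dimensionality of endomorphism algebras forces $\End_{\zz(\cc)}(S) = \kk$ for every simple object $S$, so all simples are scalar. Finally, every simple is a direct summand of $F = \bigoplus_{i \in I} \ff(i)$, which has finite-dimensional endomorphism algebra and hence finitely many indecomposable summands up to isomorphism; this yields the finiteness of the set of isomorphism classes of simples, completing the verification of the fusion-category axioms. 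The main obstacle is the ``if'' direction of~(a), which requires upgrading ``every object is projective'' to genuine semisimplicity via the finiteness properties of $\zz(\cc)$ (Krull--Schmidt from finite-dimensional Hom-spaces), all of which stem from the retract description supplied by Theorem~\ref{thm-center-semisimple} and the structure of the fusion category $\cc$.
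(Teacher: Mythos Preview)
Your argument is correct and follows the same logical skeleton as the paper's proof: the counit of $\ff\dashv\uu$ is an epimorphism, so semisimplicity of $\zz(\cc)$ forces every object to retract off a free one; conversely, exactness of $\uu$ makes free half braidings projective, and then finiteness conditions inherited from $\cc$ upgrade ``all objects projective'' to semisimplicity and, over an algebraically closed field, to the fusion axioms.

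The only differences are in packaging. The paper formulates the core steps as a general lemma about a right exact monad $T$ on an abelian category (Lemma~\ref{lem-ssmonad-sscat}), working in $\cc^Z\cong\zz(\cc)$, whereas you argue directly with the adjunction $\ff\dashv\uu$. For the passage from ``every object projective'' to ``semisimple'', the paper invokes finite length (inherited from $\cc$ via the faithful exact forgetful functor), while you use Krull--Schmidt via finite-dimensional Hom spaces; both are standard and equivalent here. For the finiteness of isomorphism classes of simples in~(b), the paper shows that each simple is a quotient of some $F_T(\Sigma)$ with $\Sigma$ simple in $\cc$, while you observe that each simple is an indecomposable summand of $\bigoplus_{i\in I}\ff(i)$, which has finite-dimensional endomorphism ring. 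Your route is slightly more self-contained; the paper's monadic lemma has the advantage of being stated in a reusable form.
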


Since the center of a spherical fusion category is ribbon (see, for example, \cite[Lemma 10.1]{TVi}), we recover M\"{u}ger's theorem:
\begin{cor}[{\cite[Theorem 1.2]{Mu}}]
If $\cc$ is a spherical fusion category over an algebraically closed field and $\dim (\cc)\neq 0$, then $\zz(\cc)$ is a modular ribbon fusion category (i.e., $\zz(\cc)$ is modular in the sense of \cite{Tu1}).
\end{cor}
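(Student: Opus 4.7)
The plan is to assemble the three ingredients already established in the paper, together with the cited sphericity-to-ribbon passage, and then check that the paper's notion of modularity reduces to Turaev's in the semisimple ribbon setting.

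First, since $\kk$ is algebraically closed and $\dim(\cc)\neq 0$, Corollary~\ref{cor-center-fusion}(b) applies and gives that $\zz(\cc)$ is a fusion category. In particular $\zz(\cc)$ is abelian semisimple with finitely many simple objects, and it is pivotal (as recalled in Section~\ref{sect-centerusual}, $\zz(\cc)$ inherits the pivotal structure from $\cc$). Next, since $\cc$ is \emph{spherical}, the standard fact cited from \cite[Lemma~10.1]{TVi} upgrades the canonical braided pivotal structure on $\zz(\cc)$ to a ribbon structure, i.e.\ the twist $\theta$ on $\zz(\cc)$ is self-dual. Thus $\zz(\cc)$ is a ribbon fusion category.

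It remains to note that $\zz(\cc)$ is modular in the sense of \cite{Tu1}. By Theorem~\ref{thm-center-modular}, $\zz(\cc)$ admits a coend and its canonical Hopf algebra pairing $\omega$ is non-degenerate, which is the paper's (Lyubashenko) notion of modularity. For a ribbon fusion category over an algebraically closed field, this is equivalent to Turaev's notion, namely the invertibility of the $S$-matrix $S=(s_{V,W})_{V,W}$ with $s_{V,W}=\tr(\tau_{W,V}\tau_{V,W})$ indexed by representatives of the isomorphism classes of simple objects. Indeed, in the semisimple case the coend decomposes as $C=\bigoplus_V V^*\otimes V$, and up to a change of basis the matrix of $\omega$ with respect to this decomposition is (a scalar multiple of) the $S$-matrix, so non-degeneracy of $\omega$ is equivalent to invertibility of $S$.

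Combining these three facts, $\zz(\cc)$ is a ribbon fusion category whose $S$-matrix is invertible, which is precisely a modular ribbon fusion category in the sense of \cite{Tu1}. The only non-trivial step is the last translation between the two notions of modularity; everything else is a direct application of previously established results. The dimension statement $\dim(\zz(\cc))=\dim(\cc)^2$ from Theorem~\ref{thm-center-modular} is a bonus that, in particular, is non-zero, consistent with the classical fact that a Turaev-modular category must have non-zero global dimension.
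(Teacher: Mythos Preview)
Your proof is correct and follows essentially the same approach as the paper: the paper derives this corollary in one sentence from Corollary~\ref{cor-center-fusion}(b), Theorem~\ref{thm-center-modular}, and the cited fact that the center of a spherical fusion category is ribbon. Your more detailed justification of the equivalence between non-degeneracy of the canonical pairing and invertibility of the $S$-matrix is exactly the content of the Remark in Section~\ref{sect-modular-def}.
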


Note that by  \cite{ENO}, the hypothesis $\dim (\cc)\neq 0$ of the previous corollary is automatically fulfilled  on a field of characteristic zero.

\begin{exa}
Let $G$ be a finite group and $\kk$ be a commutative ring. The category $\cc_{G,\kk}$ of $G$-graded free \kt modules of finite rank is a spherical fusion category. The dimension of $\cc_{G,\kk}$ is $\dim(\cc_{G,\kk})=|G| 1_\kk$, where $|G|$ is the order of $G$. By Theorem~\ref{thm-center-modular}, the center $\zz(\cc_{G,\kk})$ of $\cc_{G,\kk}$ is modular of dimension $|G|^2 1_\kk$. When $|G|$ is not invertible in $\kk$, by Theorem~\ref{thm-center-semisimple}, there exist  half braidings of $\cc_{G,\kk}$ which are not retracts of any free half braiding. If particular, if $\kk$ is a field of characteristic $p$ which divides $|G|$, then  $\zz(\cc_{G,\kk})$ is not semisimple.
\end{exa}

\section{Modular categories}\label{sect-Big-modular}

In this section, we clarify some notions used in the previous section.
More precisely, in Section~\ref{sect-HA}, we recall the definition of a Hopf algebra in a braided category and provide a criterion for the non-degeneracy of a Hopf algebra pairing. In Section~\ref{sect-coend}, we recall the definition of a coend. In Section~\ref{sect-coend-category}, we describe the Hopf algebra structure of the coend of a braided rigid category.
Sections~\ref{sect-dim-def} and~\ref{sect-modular-def} are devoted to the definition of respectively the dimension and the modularity of a braided category admitting a coend.

\subsection{Hopf algebras, pairings, and integrals}\label{sect-HA}
Let $\bb$ be a braided category, with braiding~$\tau$. Recall that a \emph{bialgebra in $\bb$} is an object $A$ of~$\bb$ endowed with four morphisms $m\co A \otimes A \to A$ (the product), $u\co \un \to A$ (the unit), $\Delta\co A \to A \otimes A$ (the coproduct), and $\varepsilon\co A \to \un$ (the counit) such that:
\begin{align*}
& m(m \otimes \id_A)=m(\id_A \otimes m), \qquad
 m(\id_A \otimes u)=\id_A=m(u \otimes \id_A), \\
& (\Delta \otimes \id_A)\Delta=(\id_A \otimes \Delta)\Delta,  \qquad (\id_A \otimes \varepsilon)\Delta=\id_A=(\varepsilon \otimes \id_A)\Delta,\\
& \Delta m=(m \otimes m)(\id_A \otimes \tau_{A,A} \otimes \id_A)(\Delta \otimes \Delta),\\
& \Delta u=u \otimes u, \qquad  \varepsilon m=\varepsilon \otimes \varepsilon, \qquad \varepsilon u=\id_\un.
\end{align*}
An \emph{antipode} for a bialgebra $A$ in $\bb$ is a morphism $S\co A \to A$ in $\bb$ such that
\begin{equation*}
m(S \otimes \id_A)\Delta=u \varepsilon=m(\id_A \otimes S)\Delta.
\end{equation*}
If it exists, an antipode is unique. A \emph{Hopf algebra in $\bb$} is a bialgebra in $\bb$ which admits an invertible antipode.

Let $A$ be a Hopf algebra in $\bb$. A \emph{Hopf pairing} for $A$ is a morphism $\omega\co A \otimes A \to \un$ such that
\begin{align*}
&\omega(m \otimes \id_A)=\omega (\id_A \otimes \omega \otimes \id_A)(\id_{A^{\otimes 2}} \otimes \Delta), && \omega(u
\otimes
\id_A)=\varepsilon,\\
&\omega(\id_A \otimes m)=\omega (\id_A \otimes \omega \otimes \id_A)(\Delta \otimes \id_{A^{\otimes 2}}), &&
\omega(\id_A \otimes u)=\varepsilon.
\end{align*}
These axioms imply that $\omega(S \otimes \id_A)=\omega(\id_A \otimes S)$.

A Hopf pairing $\omega$ for $A$ is \emph{non-degenerate} if there exists a morphism $\Omega \co \un \to A \otimes A$ in $\bb$ such that
$$
(\omega \otimes \id_A)(\id_A \otimes \Omega)=\id_A=(\id_A \otimes \omega)(\Omega \otimes \id_A).
$$
If such is the case, the morphism $\Omega$ is unique and called the \emph{inverse} of $\omega$.

A \emph{left} (resp.\@ \emph{right}) \emph{integral} for $A$ is a morphism $\Lambda \co \un \to A$ such that
$$
m(\id_A \otimes \Lambda)=\Lambda \, \varepsilon \qquad (\text{resp.\ } m(\Lambda \otimes \id_A)=\Lambda \, \varepsilon).
$$
A \emph{left} (resp.\@ \emph{right}) \emph{cointegral} for $A$ is a morphism $\lambda \co A \to \un $ such that
$$
(\id_A \otimes \lambda)\Delta=u\, \lambda \qquad (\text{resp.\ } (\lambda \otimes \id_A)\Delta=u\, \lambda).
$$
A (co)integral is \emph{two-sided} if it is both a left and a right (co)integral.

If $\Lambda$ is a left (resp.\@ right) integral for $A$, then $S\Lambda$ is a right (resp.\@ left) integral for $A$. If $\lambda$ is a left (resp.\@ right) cointegral for $A$, then $\lambda S$ is a right (resp.\@ left) cointegral for $A$.

Let $\omega$ be a Hopf pairing for $A$ and $\Lambda \co \un \to A$ be a morphism in $\bb$. Assume $\omega$ is non-degenerate. Then $\Lambda$ is a left  integral for $A$ if and only if $\lambda=\omega(\id_A \otimes \Lambda)$ is right cointegral for $A$, and $\Lambda$ is a right  integral for $A$ if and only if $\lambda=\omega(\Lambda \otimes \id_A)$ is left cointegral for $A$.

\begin{lem}\label{lem-non-degen}
Let $\omega$ be a Hopf pairing for a Hopf algebra $A$ in a braided category~$\bb$. Assume there exist morphisms $\Lambda, \Lambda' \co \un \to A$ in $\bb$ such that
\begin{enumerate}
  \renewcommand{\labelenumi}{{\rm (\alph{enumi})}}
\item  $\omega(\Lambda \otimes \id_A)$ and $\omega(\id_A \otimes \Lambda')$ are left cointegrals for $A$;
\item $\omega(\Lambda \otimes \Lambda')$ is invertible in $\End_\bb(\un)$.
\end{enumerate}
Then $\omega$ is non-degenerate, with inverse
$$
\Omega=\omega(\Lambda \otimes\Lambda')^{-1} \, (S\otimes \id_A \otimes \omega)(\id_A \otimes \Delta \Lambda \otimes \id_A) \Delta \Lambda',
$$
and $\Lambda$ and $\Lambda'$ are right integrals for $A$.
\end{lem}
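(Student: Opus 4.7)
The plan is to verify directly that $\Omega$ is a two-sided inverse of $\omega$, and then to derive the right integrality of $\Lambda$ and $\Lambda'$ from non-degeneracy by a short argument. Set $\alpha = \omega(\Lambda \otimes \Lambda') \in \End_\bb(\un)$, which is invertible by assumption~(b).

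For the first inverse identity $(\omega \otimes \id_A)(\id_A \otimes \Omega) = \id_A$, I expand the definition of $\Omega$: acting on a morphism $a \co \un \to A$, the composite produces $\alpha^{-1} \omega(a \otimes S\Lambda'_{(1)}) \omega(\Lambda_{(2)} \otimes \Lambda'_{(2)}) \Lambda_{(1)}$. Using the antipode compatibility $\omega(\id_A \otimes S) = \omega(S \otimes \id_A)$ together with the pairing axiom $\omega(xy \otimes z) = \omega(x \otimes z_{(2)})\omega(y \otimes z_{(1)})$, the two pairings merge into one and the required identity reduces to $\omega(\Lambda_{(2)} \cdot Sa \otimes \Lambda') \, \Lambda_{(1)} = \alpha \, a$. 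This reduced identity is the technical heart of the proof.

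To establish it, I intend to invoke the left cointegral property $y_{(1)} \omega(y_{(2)} \otimes \Lambda') = u \, \omega(y \otimes \Lambda')$ of $\lambda' = \omega(\id_A \otimes \Lambda')$ applied to a morphism carefully built from $\Lambda$ and $a$, together with the braided bialgebra axiom $\Delta \circ m = (m \otimes m)(\id \otimes \tau_{A,A} \otimes \id)(\Delta \otimes \Delta)$, the anti-coalgebra property $\Delta \circ S = \tau_{A,A}(S \otimes S)\Delta$ of the antipode, and the antipode zig-zag $m(\id_A \otimes S)\Delta = u\varepsilon$. The calculation is cleanest in the graphical calculus for $\bb$: the braidings introduced by the braided coproduct of the product $\Lambda \cdot Sa$ must cancel once the cointegral identity and the antipode axiom are applied at the right places, leaving exactly $\alpha \cdot a$. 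The companion identity $(\id_A \otimes \omega)(\Omega \otimes \id_A) = \id_A$ is verified by a symmetric argument using the other pairing axiom $\omega(x \otimes yz) = \omega(x_{(1)} \otimes z)\omega(x_{(2)} \otimes y)$ and the cointegral property of $\lambda = \omega(\Lambda \otimes \id_A)$.

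Once $\omega$ is non-degenerate, right integrality of $\Lambda$ follows by a short direct calculation: by non-degeneracy it suffices to verify $\omega(\Lambda x \otimes y) = \varepsilon(x) \omega(\Lambda \otimes y)$ for all $x, y$; the pairing axiom rewrites the left side as $\omega(\Lambda \otimes y_{(2)}) \omega(x \otimes y_{(1)})$, and applying $\omega(x \otimes -)$ to the cointegral identity $y_{(1)} \omega(\Lambda \otimes y_{(2)}) = u \, \omega(\Lambda \otimes y)$ produces exactly the desired right side. The analogous argument using the second pairing axiom and the cointegral property of $\lambda'$ shows that $\Lambda'$ is a right integral. The main obstacle throughout lies in the reduced identity $\omega(\Lambda_{(2)} Sa \otimes \Lambda') \Lambda_{(1)} = \alpha \, a$: in a non-braided Hopf algebra one could recognize $\Lambda_{(1)} \otimes \Lambda_{(2)} Sa$ as $(\id_A \otimes m)(\Delta\Lambda \otimes Sa)$ and directly exploit the cointegral axiom, whereas in the braided setting the coproduct $\Delta(\Lambda \cdot Sa)$ involves nontrivial braidings between $A$-strands, and tracking the cancellation of these braidings via naturality of $\tau$ and the antipode zig-zag is the delicate point of the argument.
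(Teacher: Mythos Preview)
Your overall strategy coincides with the paper's: verify directly that the proposed $\Omega$ is a two-sided inverse of $\omega$ using the Hopf pairing and Hopf algebra axioms together with the cointegral hypotheses, and then deduce right integrality of $\Lambda,\Lambda'$ from non-degeneracy. Your reduction step is correct and is exactly the first move in the paper's graphical computation (merging the two pairings via the product axiom of $\omega$). Your derivation of right integrality is also correct, and is just a spelled-out version of the fact the paper invokes from the preceding paragraph.

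There is, however, a genuine gap in your plan for the ``reduced identity'' $\omega(\Lambda_{(2)}\cdot Sa\otimes\Lambda')\,\Lambda_{(1)}=\alpha\,a$. You propose to prove it using only the cointegral property of $\lambda'=\omega(\id_A\otimes\Lambda')$, and symmetrically to prove the companion identity using only $\lambda=\omega(\Lambda\otimes\id_A)$. The paper's proof uses \emph{both} cointegral hypotheses for \emph{each} of the two inverse identities, and this does not appear to be avoidable. Concretely: after your merge, the cointegral property of $\lambda'$ applied to a morphism built from $\Lambda$ and $a$ (say $y=\Lambda\cdot Sa$, expanded via the braided bialgebra axiom and the anti-coalgebra property of $S$) produces an equation whose scalar side is $\lambda'(\Lambda\cdot Sa)=\omega(\Lambda\cdot Sa\otimes\Lambda')$. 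To evaluate that scalar you must split it by the pairing axiom into $\omega(\Lambda\otimes\Lambda'_{(2)})\,\omega(Sa\otimes\Lambda'_{(1)})$ and then collapse $\Lambda'_{(1)}\,\omega(\Lambda\otimes\Lambda'_{(2)})$ to $\alpha\cdot u$ --- and that step is precisely the left-cointegral property of $\lambda$. Without it you have no way to control how $\Lambda$ interacts with the second argument of $\omega$, and the reduced identity does not close up. In the paper's five-step graphical chain this shows up as step~2 (using $\lambda$) and step~5 (using $\lambda'$); your outline effectively skips the step that requires the other cointegral. The fix is simply to allow yourself both hypotheses in each half of the argument, exactly as the paper does.
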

\begin{proof}
Set $e = (S\otimes \id_A \otimes \omega)(\id_A \otimes \Delta \Lambda \otimes \id_A) \Delta \Lambda'\co \un \to A \otimes A$. Let us depict the product $m$, coproduct $\Delta$, antipode  $S$ of $A$, and the morphisms $\omega$, $\Lambda$, $\Lambda'$   as follows:
$$
 m= \,\rsdraw{.25}{.3}{notation_prod2}\;,\quad
 \Delta= \,\rsdraw{.25}{.3}{notation_coprod2}\;, \quad
 S = \, \rsdraw{.25}{.3}{notation_S2}\;, \quad
 \omega = \, \rsdraw{.25}{.3}{notation_omega2}\;, \quad
 \Lambda = \, \rsdraw{.25}{.3}{notation_Lambda2}\;, \quad
 \Lambda' = \, \rsdraw{.25}{.3}{notation_LambdaPrime2}\;.
$$
Then
$(\id_A\otimes \omega) (e \otimes \id_A)=\omega(\Lambda \otimes \Lambda') \,\id_A$ since
$$
\rsdraw{.45}{.3}{lemInvPairing1b}\;
 = \;\rsdraw{.45}{.3}{lemInvPairing2b} \;
 = \;\rsdraw{.45}{.3}{lemInvPairing3b}\;
 = \;\rsdraw{.45}{.3}{lemInvPairing4b}  \;
 = \;  \rsdraw{.45}{.3}{lemInvPairing5b}  \;
 = \;\rsdraw{.45}{.3}{lemInvPairing6b} \;.
$$
We use the product/coproduct axioms of a Hopf pairing in the first and fourth  equalities, the unit axiom and the fact that  $\omega(\Lambda \otimes \id_A)$ is a left cointegral in the second equality, the compatibility of $m$ and $\Delta$ and the axiom of the antipode in the third equality, and finally the fact that $\omega(\id_A \otimes \Lambda')$ is a left cointegral and the unit/counit axiom of a Hopf pairing in the last equality.
Similarly one shows that $(\omega \otimes \id_A)(\id_A \otimes e) =  \omega(\Lambda \otimes \Lambda') \,\id_A$. Thus $\Omega=\omega(\Lambda \otimes\Lambda')^{-1} \, e$ is an inverse of $\omega$.

Finally, since  $\omega$ is non-degenerate and $\omega(\Lambda \otimes A)$ and  $\omega(A \otimes \Lambda')$ are left cointegrals, we conclude that $\Lambda$ and $\Lambda'$ are right integrals.
\end{proof}

\subsection{Coends}\label{sect-coend}
Let $\cc$ and $\dd$ be categories. A \emph{dinatural
transformation} from  a functor $F\co \dd^\opp \times \dd \to \cc$
to an object $A$ of $\cc$  is a family of morphisms in~$\cc$
$$
d=\{d_Y \co F(Y,Y) \to A\}_{Y \in \dd}
$$
such that for every morphism $f\co X \to Y$ in~$\dd$, we have
$$
d_X F(f, \id_X)=d_Y F(\id_Y,f)\colon F(Y,X)\to A.
$$
The {\it composition}  of such a $d$ with a morphism $\phi\co A\to B$ in $\cc$
is the dinatural transformation $ \phi\circ d= \{\phi \circ d_X \co
F(Y,Y) \to B\}_{Y \in \dd}$ from $F$ to $B$. A \emph{coend} of~$F$
is a pair $(C,\rho)$ consisting  in an object $C$ of $\cc$ and a
dinatural transformation $\rho$ from $F$ to $C$ satisfying the
following universality condition:   every dinatural transformation
$d$ from $F$ to  an object   of $\cc$ is the composition of $\rho$
with a morphism   in $\cc$  uniquely determined by $d$. If $F$ has a
coend $(C,\rho)$, then  it is unique (up to unique isomorphism). One
writes $C= \int^{Y \in \dd}F(Y,Y)$. For more on coends, see \cite{ML1}.

\begin{rem}\label{rem-coend-fusion}
Let $F\co \dd^\opp \times \dd \to \cc$ be \kt linear functor, where $\cc$ is a $\kk$-additive   category and $\dd$ is a fusion category (over $\kk$). Then $F$ has a coend. More precisely, pick  a  (finite)  representative set $I$ of simple objects of $\dd$
and  set $C=\oplus_{i \in I} F(i,i)$.  Let
  $\rho=\{\rho_Y \co F(Y,Y) \to C\}_{Y \in \dd}$ be defined by
$\rho_Y=\sum_{\alpha } F(q^\alpha_Y,p^\alpha_Y)$, where
$(p_Y^\alpha,q_Y^\alpha)_{\alpha  }$ is any $I$-partition of~$Y$. Then $(C, \rho)$ is a coend of $F$ and each
dinatural transformation $d$ from $F$ to any object $A$ of $\cc$ is the
composition of $\rho$ with    $ \oplus_{i \in I}\, d_i \co C \to A$.
\end{rem}

\subsection{The coend of a braided rigid category}\label{sect-coend-category}
Let $\bb$ be braided rigid category. The coend
\begin{equation*}
C=\int^{Y \in \bb} \leftidx{^\vee}{Y}{} \otimes Y,
\end{equation*}
if it exists, is called the \emph{coend of  $\bb$}.

Assume $\bb$ has a coend $C$ and denote by
$i_Y\co \leftidx{^\vee}{Y}{}  \otimes Y \to C$ the corresponding universal dinatural transformation. The
\emph{universal coaction} of $C$ on the objects of $\bb$ is the natural transformation $\delta$ defined by:
\begin{equation}\label{eq-coaction-def}
\delta_Y=(\id_Y \otimes i_Y)(\lcoev_Y \otimes \id_Y)\co Y \to Y \otimes C, \quad \text{depicted as} \quad \psfrag{C}[Bc][Bc]{\scalebox{.8}{$C$}} \psfrag{Y}[Bc][Bc]{\scalebox{.8}{$Y$}}\delta_Y=\rsdraw{.45}{.95}{delta-bis}.
\end{equation}
According to Majid~\cite{Maj2}, $C$ is a Hopf algebra in $\bb$. Its coproduct $\Delta$, product $m$,  counit $\varepsilon$, unit $u$, and antipode $S$ with inverse $S^{-1}$ are characterized by the following equalities, where $X,Y\in\bb$:
\begin{gather*}
\psfrag{Y}[Bc][Bc]{\scalebox{.8}{$Y$}}
\psfrag{C}[Bc][Bc]{\scalebox{.8}{$C$}}
\psfrag{D}[cc][cc]{\scalebox{.9}{$\Delta$}}
\rsdraw{.45}{.9}{cpCT} \, = \;\, \rsdraw{.45}{.9}{cpCT-def} , \quad
\psfrag{Y}[Bc][Bc]{\scalebox{.8}{$Y$}}
\psfrag{C}[Bc][Bc]{\scalebox{.8}{$C$}}
\psfrag{D}[cc][cc]{\scalebox{1}{$\varepsilon$}}
\quad\quad\rsdraw{.45}{.9}{epsCT} \; = \; \psfrag{Y}[Bc][Bc]{\scalebox{.8}{$Y$}}\rsdraw{.45}{.9}{epsCT-def} \; , \qquad \;\;
\psfrag{Y}[Bc][Bc]{\scalebox{.8}{$Y$}}
\psfrag{X}[Bc][Bc]{\scalebox{.8}{$X$}}
\psfrag{Z}[Bc][Bc]{\scalebox{.8}{$X\otimes Y$}}
\psfrag{C}[Bc][Bc]{\scalebox{.8}{$C$}}
\psfrag{m}[cc][cc]{\scalebox{.9}{$m$}}
\rsdraw{.45}{.9}{mCT} \, = \;\, \rsdraw{.45}{.9}{mCT-def} \,,\\
 u=\delta_\un, \qquad\;
\psfrag{a}[Bc][Bc]{\scalebox{.9}{$\lev_{Y}$}}
\psfrag{u}[Bc][Bc]{\scalebox{.9}{$\lcoev_{Y}$}}
\psfrag{Y}[Br][Br]{\scalebox{.8}{$Y$}}
\psfrag{C}[Bl][Bl]{\scalebox{.8}{$C$}}
\psfrag{D}[cc][cc]{\scalebox{.9}{$S$}} \rsdraw{.45}{.9}{antipCT} \; = \,
\rsdraw{.45}{.9}{antipCT-def}\, ,
\qquad
\psfrag{a}[Bc][Bc]{\scalebox{.9}{$\rev_{Y}$}}
\psfrag{u}[Bc][Bc]{\scalebox{.9}{$\rcoev_{Y}$}}
\psfrag{Y}[Br][Br]{\scalebox{.8}{$Y$}}
\psfrag{C}[Bl][Bl]{\scalebox{.8}{$C$}}
\psfrag{D}[cc][cc]{\scalebox{.9}{$S^{-1}$}} \rsdraw{.45}{.9}{antipinvCT} \; = \,
\rsdraw{.45}{.9}{antipinvCT-def}\,.
\end{gather*}
Furthermore, the morphism $\omega\co C \otimes C \to \un$ defined by
\begin{equation*}
\psfrag{Y}[Bl][Bl]{\scalebox{.8}{$Y$}}\psfrag{X}[Br][Br]{\scalebox{.8}{$X$}}
\psfrag{C}[Bl][Bl]{\scalebox{.8}{$C$}}
\psfrag{w}[cc][cc]{\scalebox{.9}{$\omega$}}
\rsdraw{.45}{.9}{wCT} \, = \;\, \rsdraw{.45}{.9}{wCT-def}
\end{equation*}
is a Hopf pairing for $C$, called the \emph{canonical pairing}. Moreover this pairing satisfies the following self-duality condition:    $\omega \tau_{C,C} (S \otimes
S)=\omega$.

\subsection{The dimension of a braided pivotal category}\label{sect-dim-def}
Let $\bb$ be a braided pivotal category admitting a coend $C$.

\begin{lem}
The left and right dimension of $C$ coincide.
\end{lem}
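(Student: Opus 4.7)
The plan is to reduce the identity $\dim_l(C)=\dim_r(C)$ to the elementary pivotal identity $\dim_l(Y^*\otimes Y)=\dim_r(Y^*\otimes Y)$, which holds for every object $Y$ of $\bb$. Indeed, using the pivotal relations $\dim_l(Y^*)=\dim_r(Y)$ and $\dim_r(Y^*)=\dim_l(Y)$ from Section~\ref{sect-trace}, both dimensions equal the common value $\dim_l(Y)\dim_r(Y)$:
\[
\dim_l(Y^*\otimes Y)=\dim_l(Y^*)\dim_l(Y)=\dim_r(Y)\dim_l(Y)=\dim_r(Y^*)\dim_r(Y)=\dim_r(Y^*\otimes Y).
\]
Thus each building block $Y^*\otimes Y$ of the coend contributes symmetrically to both traces, and the task is to transfer this pointwise symmetry to $C$ itself via the universal property.

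When $C$ admits an explicit finite direct sum decomposition of the form $\bigoplus_{i\in I}i^*\otimes i$ --- for instance when $\bb$ is fusion, in view of Remark~\ref{rem-coend-fusion} --- additivity of $\tr_l$ and $\tr_r$ over finite direct sums combined with the pointwise identity above immediately gives
\[
\dim_l(C)=\sum_{i\in I}\dim_l(i)\dim_r(i)=\dim_r(C).
\]
This already covers the main applications of the paper, where the relevant coends (or their images under a dimension-preserving forgetful functor) decompose in this way.

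For a general braided pivotal category admitting a coend, the strategy is to exploit the canonical Hopf pairing $\omega\co C\otimes C\to\un$ together with its self-duality $\omega\tau_{C,C}(S\otimes S)=\omega$ established at the end of Section~\ref{sect-coend-category}. Combined with the pivotal data, this self-duality should provide a canonical morphism $C\to C^*$ whose graphical behaviour intertwines the left-closure $\lev_C\rcoev_C$ and the right-closure $\rev_C\lcoev_C$, yielding the desired equality of scalars.

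The main obstacle is precisely the general case: the coend is a colimit, not a literal direct sum, so the summand-level identity cannot simply be added up. Scalars in $\End_\bb(\un)$ do not transparently factor through $C$ by dinaturality alone, and any argument in this generality must essentially invoke the full Hopf algebra structure of $C$ (unit, counit, antipode, canonical pairing) together with the braided-pivotal structure to lift the pointwise symmetry of dimensions to the level of the coend.
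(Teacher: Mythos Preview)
Your argument for the fusion case is correct and pleasant, but it does not prove the lemma as stated: the lemma concerns an arbitrary braided pivotal category admitting a coend, and your ``general case'' paragraph is not a proof. You only outline a hope that the self-duality of $\omega$ might produce a suitable isomorphism $C\to C^*$, and you then explicitly concede you do not see how to carry this through. That is a genuine gap. Moreover, even if $\omega$ were non-degenerate (which is not assumed here), a Hopf pairing does not automatically intertwine left and right closures in the way you suggest; making that precise would already require essentially the key idea you are missing.

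The paper's approach bypasses both the direct-sum crutch and the pairing. It introduces a canonical monoidal natural automorphism $\upsilon=\{\upsilon_X\co X\to X\}$ of $1_\bb$ built from the braiding and duality, with the property that $(\upsilon_X)^*=\upsilon_{X^*}^{-1}$ and that the full subcategory $\bb_0$ of objects with $\upsilon_X=\id_X$ is ribbon, hence spherical. The whole point is then a one-line dinaturality computation: for every $X$,
\[
\upsilon_C\, i_X \;=\; i_X\,\upsilon_{X^*\otimes X} \;=\; i_X(\upsilon_{X^*}\otimes \upsilon_X) \;=\; i_X(\upsilon_X^*\,\upsilon_{X^*}\otimes \id_X) \;=\; i_X,
\]
so $\upsilon_C=\id_C$, i.e.\ $C\in\bb_0$, and therefore $\dim_l(C)=\dim_r(C)$. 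This is exactly the ``lifting via dinaturality'' you were looking for; the missing idea is to test a well-chosen natural automorphism against the universal cone, rather than trying to push scalar identities through the colimit.
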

\begin{proof}
Let $\upsilon=\{\upsilon_X\}_{X \in \bb}$ be the natural transformation defined by
$$
\upsilon_X=\;\psfrag{X}[Bc][Bc]{\scalebox{.8}{$X$}} \rsdraw{.45}{.9}{upsilon} \; \co X \to X.
$$
Then $\upsilon$ is natural monoidal isomorphism,  that is, $\upsilon_{X \otimes Y} = \upsilon_X \otimes \upsilon_Y$ and $\upsilon_\un = \id_\un$,
which  implies that ${\upsilon_X}^* = \upsilon^{-1}_{X^*}$.
The full subcategory $\bb_0$  of $\bb$ made of the objects $X$ of $\bb$ satisfying $\tau_X = \id_X$ is a ribbon category. Let us prove that the coend $C$ of $\bb$ belongs to $\bb_0$. Denote by  $i=\{i_X\co X^* \otimes X \to C\}_{X \in \bb}$ the universal dinatural transformation associated with $C$.
For any object $X$ of $\cc$, by naturality and monoidality of $\upsilon$ and dinaturality of $i$, the following holds
$$
\upsilon_C i_X =i_X \upsilon_{(X^*\otimes X)}= i_X(\upsilon_{X^*} \otimes \upsilon_X) = i_X(\upsilon_X^*\upsilon_{X^*} \otimes \id_{X})=i_X.
$$
So $\upsilon_C=\id_C$, that is, $C$ belongs to $\bb_0$.  Hence the left and right dimension of $C$ coincide, since $\bb_0$ is a ribbon category.
\end{proof}

We define the \emph{dimension of $\bb$} as $\dim(\bb)=\dim_l(C)= \dim_r(C)$.

This definition agrees with the standard definition of  the dimension of a pivotal fusion category. Indeed, any pivotal fusion category $\cc$ (over the ring $\kk$) admits a coend $C=\oplus_{i \in I} i^* \otimes i$, where $I$ is a (finite)  representative set of scalar objects of $\cc$, and so
$$
\dim_l(C)=\dim_r(C)=\sum_{i\in I} \dim_l(i^*)\dim_l(i)=\sum_{i\in I} \dim_r(i)\dim_l(i).
$$

\subsection{Modular categories}\label{sect-modular-def}
By a \emph{modular category}, we mean a braided rigid category which admits a coend whose canonical pairing is non-degenerate.
Note that when $\bb$ is ribbon, this definition coincides with that of a \emph{2-modular category} given in \cite{Lyu2}.

\begin{rem}
Let $\bb$ be a braided pivotal fusion category over $\kk$. Let $I$ be a representative set of the scalar objects of $\bb$. Recall that  $C=\oplus_{i \in I} i^* \otimes i$ is the coend of $\bb$. For $i,j \in I$, set
$$
S_{i,j}=(\lev_i \otimes \rev_j)(\id_{i^*} \otimes \tau_{j,i}\tau_{i,j} \otimes \id_{j^*})(\rcoev_i \otimes \lcoev_j) \in \kk.
$$
The matrix $S=[S_{i,j}]_{i,j \in I}$, called the \emph{$S$-matrix} of $\bb$, is invertible if and only if the canonical pairing of $C$ is non-degenerate. In particular a modular category in the sense of~\cite{Tu1} is a ribbon fusion category which is modular in the above sense.
\end{rem}

\section{Proofs}\label{sect-proofs}
 The statements of  Section~\ref{main-results}  derive directly from the theory of Hopf monads,  introduced in
\cite{BV2} and developed it in  \cite{BV3,BLV}. Hopf monads generalize
Hopf algebras in the setting of general monoidal categories. In Section~\ref{sect-Hopf-monoads}, we recall some basic definitions  concerning Hopf monads.  In Section~\ref{sect-ZC-monadic}, we give a  Hopf monadic description of the center $\zz(\cc)$ of a fusion category $\cc$, from which is derived the explicit description of the coend of $\zz(\cc)$. In Section~\ref{sect-handleslide}, we prove a `handleslide'  property  for pivotal fusion categories. In Section~\ref{sect-proof-modular}, we use the explicit description of the coend of $\zz(\cc)$ to prove Theorem~\ref{thm-center-modular}  and prove that the morphism $\Lambda$ of \eqref{eq-def-Lambda} is an integral invariant under the antipode.  Sections~\ref{sect-proof-semisimple} and~\ref{sect-proof-coro} are devoted to the proofs of Theorem~\ref{thm-center-semisimple} and Corollary~\ref{cor-center-fusion}, respectively.

\subsection{Hopf monads and their modules}\label{sect-Hopf-monoads}
Let $\cc$ be a category.
A \emph{monad} on $\cc$  is a monoid in the category of endofunctors of $\cc$, that
is, a triple $(T,\mu,\eta)$ consisting of a functor $T\co \cc \to
\cc$ and two natural transformations $$\mu=\{\mu_X\co T^2(X) \to
T(X)\}_{X \in \cc}\quad {\text {and}} \quad  \eta=\{\eta_X\co X \to
T(X)\}_{X \in \cc}$$  called the \emph{product} and the \emph{unit}
of $T$, such that for any object $X$ of $\cc$, $$\mu_X
T(\mu_X)=\mu_X\mu_{T(X)} \quad {\text {and}} \quad
\mu_X\eta_{T(X)}=\id_{T(X)}=\mu_X T(\eta_X).$$
Given a monad $T=(T, \mu, \eta)$ on $\cc$, a  $T$\ti module in
$\cc$ is a pair $(M,r)$ where $M$ is an object of $\cc$ and $r\co T(M) \to M$ is a
morphism in $\cc$ such that $r T(r)= r \mu_M$ and $r \eta_M= \id_M$.
A morphism from a $T$\ti module  $(M,r)$ to a $T$\ti module $(N,s)$ is a morphism $f \co M \to N$ in $\cc$ such that $f r=s T(f)$.
This defines the {\it category  $\cc^T$  of $T$-modules in $\cc$} with composition induced by that in $\cc$. We
define a forgetful functor  $U_T\co\cc^T \to \cc$    by $U_T(M,r)=M$ and
$U_T(f)=f$.  The forgetful functor $U_T$ has a left adjoint $F_T \co \cc \to \cc^T$, called the free module functor, defined by $F_T(X)=(T(X),\mu_X)$  and $F_T(f)=T(f)$. Note that if $\cc$ is $\kk$-additive and $T$ is $\kk$-linear (that is, $T$
induces $\kk$-linear maps on Hom spaces), then the category $\cc^T$
is $\kk$-additive and the functors $U_T$ and $F_T$ are  $\kk$-linear.

Let $\cc$ be a monoidal category. A \emph{bimonad} on    $\cc$ is a monoid in the
category of  comonoidal endofunctors of $\cc$. In other words, a bimonad on $\cc$ is a
monad $(T,\mu,\eta)$ on $\cc$ such that the  functor $T\co \cc \to
\cc$ and the natural transformations $\mu$ and $\eta$ are
comonoidal, that is, $T$ comes equipped with a natural transformation $ T_2=\{T_2(X,Y) \co  T(X \otimes Y)\to T(X)
\otimes T(Y)\}_{X,Y \in \cc} $ and a morphism $T_0\co T(\un) \to \un$ such that
\begin{align*}
& \bigl(\id_{T(X)} \otimes T_2(Y,Z)\bigr) T_2(X,Y \otimes Z)= \bigl(T_2(X,Y) \otimes \id_{T(Z)}\bigr) T_2(X \otimes Y, Z) ;\\
& (\id_{T(X)} \otimes T_0) T_2(X,\un)=\id_{T(X)}=(T_0 \otimes \id_{T(X)}) T_2(\un,X) ;\\
& T_2(X,Y)\mu_{X \otimes Y}=(\mu_X \otimes \mu_Y) T_2(T(X),T(Y))T(T_2(X,Y));\\
& T_2(X,Y)\eta_{X \otimes Y}=\eta_X \otimes \eta_Y.
\end{align*}
For any bimonad $T$ on $\cc$, the category  of $T$\ti modules
$\cc^T$  has a monoidal structure with unit object $(\un,T_0)$ and
with tensor product
$$(M,r) \otimes (N,s)=\bigl(M \otimes N, (r \otimes s) \,
T_2(M,N)\bigr). $$ Note that the forgetful functor $U_T\co \cc^T \to
\cc$ is strict monoidal.

Given a  bimonad $(T,\mu,\eta)$ on $\cc$ and objects $X, Y\in \cc$,
one defines  the \emph{left fusion operator}
$$H^l_{X,Y} =(T(X) \otimes \mu_Y)T_2(X,T(Y)) \colon T(X\otimes T(Y)) \to T(X) \otimes
T(Y)$$
 and the \emph{right fusion operator}
$$
 H^r_{X,Y}=(\mu_X \otimes
T(Y))T_2(T(X),Y)\colon T(T(X)\otimes Y) \to T(X)\otimes T(Y).$$
A \emph{Hopf monad} on   $\cc$ is a bimonad on $\cc$
whose left and right fusion operators  are isomorphisms for all objects $X, Y$ of $\cc$.
When $\cc$ is a rigid category,    a bimonad $T$ on
$\cc$ is a Hopf monad if and only if the category  $\cc^T$ is rigid.
The  structure of a rigid category in $\cc^T$ can then be
 encoded in terms of natural transformations
$$s^l=\{s^l_X\co T(\leftidx{^\vee}{T}{}(X)) \to \leftidx{^\vee}{X}{}\}_{X \in \cc}
\quad {\text {and}} \quad s^r=\{s^r_X\co T(T(X)^\vee) \to
X^\vee\}_{X \in \cc}$$ called the \emph{left and right antipodes}.
They are computed from the   fusion operators:
\begin{align*}
& s^l_X= \bigl(T_0T(\lev_{T(X)})(H^l_{\leftidx{^\vee}{T}{}(X),X})^{-1} \otimes \leftidx{^\vee}{\eta}{_X}\bigr)
\bigl(\id_{T(\leftidx{^\vee}{T}{}(X))} \otimes \lcoev_{T(X)}\bigr);\\
& s^r_X= \bigl(\eta_X^\vee \otimes T_0T(\rev_{T(X)})(H^r_{X,T(X)^\vee})^{-1}\bigr)
\bigl(\rcoev_{T(X)} \otimes \id_{T(T(X)^\vee)}\bigr).
\end{align*}
The  left and right duals of any $T$\ti module $(M,r)$ are then defined
  by
$$
\leftidx{^\vee}{(}{} M,r)=(\leftidx{^\vee}{M}{}, s^l_M T(\leftidx{^\vee}{r}{})  \quad \text{and} \quad(M,r)^\vee=(M^\vee, s^r_M T(r^\vee).
$$

A \emph{quasitriangular Hopf monad} on $\cc$ is a Hopf monad $T$ on $\cc$  equipped with an \Rt matrix, that is, a natural transformation $$R=\{R_{X,Y}\co
X \otimes Y \to T(Y) \otimes T(X)\}_{X,Y \in \cc}$$ satisfying appropriate axioms which ensure
that  the natural transformation $ \tau=\{\tau_{(M,r),(N,s)}\}_{(M,r), (N,s) \in \cc^T}$ defined by
$$
\tau_{(M,r),(N,s)}=(s \otimes r) R_{M,N}\co (M,r) \otimes (N,s) \to (N,s) \otimes (M,r)
$$
form a braiding in the category $\cc^T$ of $T$-modules.

\subsection{The coend of the center of a fusion category}\label{sect-ZC-monadic}
Let $\cc$ be a pivotal fusion category (over the ring $\kk$), with  a representative set of scalar objects $I$.
 For each object $X$  of $\cc$, by Remark~\ref{rem-coend-fusion}, the \kt linear functor $\cc^\opp \times \cc \to \cc$, defined by $(U,V) \mapsto U^* \otimes X \otimes V$, has a coend
$$Z(X)=\bigoplus_{i \in I} i^* \otimes X \otimes i,$$
with dinatural transformation $\rho_X=\{\rho_{X,Y}\}_{Y \in \cc}$ given by
$$
\rho_{X,Y}=\sum_{i \in I}\;\,
 \psfrag{i}[Bl][Bl]{\scalebox{.85}{$i$}}
 \psfrag{X}[Bl][Bl]{\scalebox{.85}{$X$}}
 \psfrag{Y}[Bl][Bl]{\scalebox{.85}{$Y$}}
\rsdraw{.5}{.9}{Z-rho}  \; \co Y^* \otimes X \otimes Y \to Z(X).
$$
The correspondence $X\mapsto Z(X)$  extends to a  functor $Z\co \cc\to \cc$. By Theorem  6.4 and Section 9.2 of \cite{BV3}, $Z$ is a quasitriangular Hopf monad on $\cc$ with the structural morphisms of $Z$ given in Figure~\ref{fig-strucZ-fusion}, where the dotted lines in the picture represent $\id_\un$.
\begin{figure}[t]
\begin{center}
$\displaystyle Z_2(X,Y)=\sum_{i \in I}$\,
 \psfrag{i}[Bc][Bc]{\scalebox{.85}{$i$}}
 \psfrag{X}[Bc][Bc]{\scalebox{.85}{$X$}}
 \psfrag{Y}[Bc][Bc]{\scalebox{.85}{$Y$}}
 \rsdraw{.45}{.9}{Z-coprod}  $\co Z(X\otimes Y) \to Z(X)\otimes Z(Y)$, \\[1em]
$\displaystyle Z_0=\sum_{i \in I}$\,
 \psfrag{i}[Bc][Bc]{\scalebox{.85}{$i$}}
 \rsdraw{.25}{.9}{Z-counit} $\co Z(\un) \to \un$, \\[.3em]
$\displaystyle \mu_X=\!\!\sum_{i,j,k \in I}$\;
 \psfrag{i}[Bc][Bc]{\scalebox{.85}{$i$}}
 \psfrag{j}[Bc][Bc]{\scalebox{.85}{$j$}}
 \psfrag{k}[Bc][Bc]{\scalebox{.85}{$k$}}
 \psfrag{X}[Bc][Bc]{\scalebox{.85}{$X$}}
 \rsdraw{.5}{.9}{Z-prod}   $\co Z^2(X ) \to Z(X) $, \\[1em]
$\displaystyle \eta_X=\,$
  \psfrag{X}[Bl][Bl]{\scalebox{.85}{$X$}}
 \rsdraw{.5}{.9}{Z-unit} $\co X \to X=\un^* \otimes X \otimes \un \hookrightarrow Z(X)$, \\[.8em]
$\displaystyle s^l_X=s^r_X=\sum_{i,j \in I}$
 \psfrag{i}[Bc][Bc]{\scalebox{.85}{$i$}}
 \psfrag{u}[Bc][Bc]{\scalebox{.85}{$i^*$}}
 \psfrag{j}[Bc][Bc]{\scalebox{.85}{$j$}}
 \psfrag{X}[Bc][Bc]{\scalebox{.85}{$X$}}
 \,\rsdraw{.35}{.9}{Z-antip} $\co Z(Z(X)^*) \to X^*$, \\[1em]
$\displaystyle  R_{X,Y}=\sum_{i \in I}$\;
 \psfrag{i}[Bl][Bl]{\scalebox{.85}{$i$}}
 \psfrag{X}[Br][Br]{\scalebox{.85}{$X$}}
 \psfrag{Y}[Bl][Bl]{\scalebox{.85}{$Y$}}
 \rsdraw{.5}{.9}{Z-rmat}   $\co X \otimes Y \to Z(Y) \otimes Z(X) $.
\end{center}
\caption{Structural morphisms of the Hopf monad $Z$}
\label{fig-strucZ-fusion}
\end{figure}
In particular, the category $\cc^Z$ of $Z$-modules is a braided pivotal category. By \cite[Theorem 6.5]{BV3}, the functor
\begin{equation}\label{eq-iso-Phi}
\Phi\co \left \{\begin{array}{ccc}
\cc^Z & \to & \zz(\cc) \\
(M,r) & \mapsto & (M, \sigma)\\
f & \mapsto & f
\end{array} \right. \; \text{where} \quad \sigma_Y=\sum_{i \in I}\;\,
 \psfrag{i}[Bl][Bl]{\scalebox{.85}{$i$}}
 \psfrag{X}[Br][Br]{\scalebox{.85}{$M$}}
 \psfrag{Y}[Bl][Bl]{\scalebox{.85}{$Y$}}
 \psfrag{M}[Bl][Bl]{\scalebox{.85}{$M$}}
 \psfrag{r}[cc][cc]{\scalebox{1}{$r$}}
\rsdraw{.5}{.9}{Z-partial}
\end{equation}
is an isomorphism of braided pivotal categories. Note that this isomorphism is a ``fusion'' version of the braided isomorphism $\zz(\mathrm{mod}_H) \simeq \mathrm{mod}_{D(H)}$ between the center of the category of modules over a finite dimensional Hopf algebra $H$ and the category of modules over the Drinfeld double $D(H)$ of $H$.
Now by \cite[Section 6.3]{BV3}, the coend of $\cc^Z$ is $(C,\alpha)$, where
$$
C=\bigoplus_{i,j \in I} i^* \otimes j^* \otimes i \otimes j \quad \text{and} \quad
\psfrag{i}[Bl][Bl]{\scalebox{.85}{$i$}}
 \psfrag{j}[Bl][Bl]{\scalebox{.85}{$j$}}
 \psfrag{k}[Bl][Bl]{\scalebox{.85}{$k$}}
 \psfrag{l}[Bl][Bl]{\scalebox{.85}{$l$}}
 \psfrag{n}[Bl][Bl]{\scalebox{.85}{$n$}}
\alpha=  \!\!\!\sum_{i,j,k,l,n\in I} \; \rsdraw{.50}{.9}{C-action},
$$

with universal dinatural transformation $\iota=\{\iota_{(M,r)}\}_{(M,r) \in \cc^Z}$ given  by
$$
\iota_{(M,r)}=
 \psfrag{i}[Bl][Bl]{\scalebox{.85}{$i$}}
 \psfrag{j}[Bl][Bl]{\scalebox{.85}{$j$}}
 \psfrag{X}[Bl][Bl]{\scalebox{.85}{$M$}}
 \psfrag{r}[cc][cc]{\scalebox{1}{$r^*$}}
\sum_{i,j \in I} \;\, \rsdraw{.50}{.9}{C-dinat2} \; \co (M,r)^* \otimes (M,r) \to (C,\alpha).
$$
Thus $(C,\sigma)=\Phi(C,\alpha)$ is the coend of $\zz(\cc)$, with universal dinatural transformation $\{\Phi(\iota_{\Phi^{-1}(M,\gamma)})\}_{(M,\gamma)\in\zz(\cc)}$. Using the description of $\Phi$ and the definition of the universal coaction given in \eqref{eq-coaction-def}, we obtain that the half braiding $\sigma$ is given by \eqref{eq-half-braiding-coend-ZC} and  that the universal coaction of $(C,\sigma)$ is given by \eqref{eq-coaction-coend-ZC}.
Finally, recall from Section~\ref{sect-coend-category} that $(C,\alpha)$ is a Hopf algebra in $\cc^Z$ endowed with a canonical Hopf algebra pairing. By  \cite[Section 9.3]{BV3}, the structural morphisms of $(C,\alpha)$  are given in Figure~\ref{fig-struct-coend}.
These structural morphisms are also those of $(C,\sigma)$, since $\Phi$ is the identity on morphisms.

\subsection{Slope and handleslide in pivotal fusion categories}\label{sect-handleslide}
Let $\cc$ be a pivotal fusion category. Recall that the  left and right dimensions of a scalar object of $\cc$ are invertible.
The \emph{slope}  of a scalar object $i$
is the invertible scalar  $\slo(i)$ defined by
$$\slo(i)=\frac{\dim_l(i)}{\dim_r(i)}.$$
The \emph{slope}  of an object $X$ of $\cc$ is the morphism $\SLO_X \co X \to X$ defined as
$$
\SLO_X=\sum_{\alpha \in A} \slo(i_\alpha)\, q_\alpha p_\alpha,
$$
where  $(p_\alpha\co X \to i_\alpha, q_\alpha \co i_\alpha \to X)_{\alpha \in A}$ is a decomposition of $X$ as a sum of scalar objects, that is, a family of  pairs of morphisms such that $i_\alpha$ is scalar for every $\alpha \in A$, $p_\alpha \,q_\beta = \delta_{\alpha,\beta}\,\id_{i_\alpha}$ for all $\alpha,\beta \in A$, and $\id_X=\sum_{\alpha \in A} q_\alpha p_\alpha$. The morphism $\SLO_X$ does not depend on the choice of the decomposition of $X$ into scalar objects. Note that $\SLO_X$ is invertible with inverse
$$
\SLO_X^{-1}=\sum_{\alpha \in A} \slo(i_\alpha)^{-1}\, q_\alpha p_\alpha.
$$

The family
$\SLO=\{\SLO_X\co X \to X\}_{X\in\cc}$ is a monoidal natural automorphism of the identity functor $1_\cc$ of $\cc$, called the \emph{slope operator} of~$\cc$. In particular
\begin{equation*}
\SLO_{Y}f=f\SLO_{X}\quad \text{and} \quad\SLO_{X\otimes Y}= \SLO_X\otimes \SLO_Y
\end{equation*}
for all objects $X,Y$ of $\cc$ and all morphism $f \co X \to Y$.
The slope operator relates the left and right traces:
for any endomorphism $f$ of an object of $\cc$,
\begin{equation}\label{eq-traces-slope}
\tr_l(f)=\tr_r(f \,\SLO_X).
\end{equation}
Note that $\cc$ is spherical if and only its slope operator is the identity.

\begin{lem}\label{firstlemmagraphic}  Let $I$ be a representative set of scalar objects of $\cc$. Then:
\begin{enumerate}
\labela
\item For any object $X$ of $\cc$,
\begin{equation*}
\psfrag{X}[Bl][Bl]{\scalebox{.9}{$X$}}
\psfrag{i}[Bl][Bl]{\scalebox{.9}{$i$}}
\psfrag{Y}[Bl][Bl]{\scalebox{.9}{$X$}}
\psfrag{X}[Bl][Bl]{\scalebox{.9}{$X$}}
\psfrag{j}[Bl][Bl]{\scalebox{.9}{$j$}}
\sum_{j \in I} \;\,\rsdraw{.45}{.9}{tensor3ab} \; = \; \rsdraw{.45}{.9}{tensor3ab2}\;.
\end{equation*}

\item   For $i,j \in I$ and $X,Y$  objects of $\cc$,
\begin{align*}
&\psfrag{X}[Bl][Bl]{\scalebox{.9}{$Y$}}
\psfrag{k}[Br][Br]{\scalebox{.9}{$X$}}
\psfrag{i}[Br][Br]{\scalebox{.9}{$i$}}
\psfrag{j}[Bl][Bl]{\scalebox{.9}{$j$}}
\rsdraw{.45}{.9}{tensor11a}\\[.5em]
&\psfrag{X}[Bl][Bl]{\scalebox{.9}{$Y$}}
\psfrag{F}[Bc][Bc]{\scalebox{.8}{$\SLO_X^{-1}$}}
\psfrag{k}[Br][Br]{\scalebox{.9}{$X$}}
\psfrag{i}[Bl][Bl]{\scalebox{.9}{$i$}}
\psfrag{j}[Bl][Bl]{\scalebox{.9}{$j$}}
\psfrag{T}[Bc][Bc]{\scalebox{.9}{$\SLO_Y$}}
= \, \frac{\dim_r(i)}{\dim_r (j)}\;\,
\rsdraw{.45}{.9}{tensor11bSL2}\,.
\end{align*}

 \item For $i \in I$ and $X,Y$ objects of $\cc$,
\begin{align*}
\psfrag{X}[Br][Br]{\scalebox{.9}{$X$}}
\psfrag{Y}[Bl][Bl]{\scalebox{.9}{$Y$}}
\psfrag{i}[Br][Br]{\scalebox{.9}{$j$}}
\psfrag{j}[Bl][Bl]{\scalebox{.9}{$i$}}
\rsdraw{.45}{.9}{tensor6b} &= \;
\sum_{j \in I} \;\, \psfrag{X}[Br][Br]{\scalebox{.9}{$X$}}
\psfrag{Y}[Bl][Bl]{\scalebox{.9}{$Y$}}
\psfrag{i}[Br][Br]{\scalebox{.9}{$j$}}
\psfrag{j}[Bl][Bl]{\scalebox{.9}{$i$}} \rsdraw{.45}{.9}{tensor6a} \\[.5em]
& = \; \sum_{j \in I} \;\, \psfrag{X}[Br][Br]{\scalebox{.9}{$X$}}
\psfrag{Y}[Bl][Bl]{\scalebox{.9}{$Y$}}
\psfrag{i}[Bl][Bl]{\scalebox{.9}{$j$}}
\psfrag{j}[Bl][Bl]{\scalebox{.9}{$i$}} \rsdraw{.45}{.9}{tensor6c}
\end{align*}
provided  there are no $j$-colored strands  in the gray area.

\item For all $i,j \in I$,
\begin{equation*}
\psfrag{i}[Br][Br]{\scalebox{.9}{$i$}}
\psfrag{j}[Bl][Bl]{\scalebox{.9}{$j$}}
 \rsdraw{.45}{.9}{tensor8a}\, = \, \frac{\delta_{i,j}}{\dim_l (i)}\; \psfrag{i}[Bc][Bc]{\scalebox{.9}{$i$}}
\rsdraw{.45}{.9}{tensor8b} \;,
\end{equation*}
\begin{equation*}
\psfrag{i}[Br][Br]{\scalebox{.9}{$i$}}
\psfrag{j}[Bl][Bl]{\scalebox{.9}{$j$}}
 \rsdraw{.45}{.9}{tensor9a}\, = \, \frac{\delta_{i,j}}{\dim_r (i)}\; \psfrag{i}[Bc][Bc]{\scalebox{.9}{$i$}}
\rsdraw{.45}{.9}{tensor9b}\;.
\end{equation*}

\end{enumerate}
\end{lem}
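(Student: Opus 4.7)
The plan is to prove the four identities in the order (a), (d), (b), (c), since each later part will invoke the earlier ones.

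First, for (a), I would unpack the tensor notation \eqref{eq-depict-tensor}: for each $j\in I$, the summand on the left-hand side represents $\sum_\alpha q_\alpha p_\alpha\co X\to X$, where $(p_\alpha,q_\alpha)_\alpha$ is a $j$\ti decomposition of $X$. Since $\cc$ is fusion, $X$ is a finite direct sum of scalar objects and $\id_X=\sum_{j\in I}\sum_\alpha q_\alpha p_\alpha$ by combining $j$\ti decompositions into a decomposition of $X$ into scalar objects, which gives (a). For (d), I would apply Schur's lemma: since $\Hom_\cc(i,j)=0$ unless $i=j$, in which case it equals $\kk\id_i$, each left-hand side is zero when $i\neq j$ and otherwise a scalar multiple of $\id_i$. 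The scalar is pinned down by closing the picture with the appropriate (co)evaluation (equivalently, taking the left or right trace): the LHS closes to $1_\kk$, while $\tr_l(\id_i)=\dim_l(i)$ and $\tr_r(\id_i)=\dim_r(i)$, which produces the factors $1/\dim_l(i)$ and $1/\dim_r(i)$ respectively.

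Next, for (b), I would start from the LHS and insert a completeness relation from (a) over the portion of the diagram that couples $X$ to $Y$, resolving the intermediate identity into a sum over scalar objects. Part (d) then forces the summation index to equal $i$ (or $j$), collapsing the sum to a single term. The appearance of the dimension ratio $\dim_r(i)/\dim_r(j)$ and of the slope operators $\SLO_X^{-1}$ and $\SLO_Y$ is traced to the conversion between left and right evaluations via the pivotal isomorphism, using identity \eqref{eq-traces-slope} to rewrite a left trace as a right trace at the cost of a slope factor. Finally, for (c), the handleslide identities follow by using (a) to insert a resolution of identity on the strand being slid over, combined with the isotopy invariance of the pivotal graphical calculus; the hypothesis that the gray area contains no $j$\ti colored strand guarantees that the slide is unobstructed and no additional crossings are introduced.

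The main obstacle is part (b): in a non-spherical pivotal setting, one must account precisely for the slope operator, and the matching of the factor $\dim_r(i)/\dim_r(j)$ with $\SLO_X^{-1}$ on one strand and $\SLO_Y$ on another requires a careful bookkeeping of how $\ldual{(\;)}$ and $(\;)^\vee$ are identified via the pivotal structure, together with repeated use of \eqref{eq-traces-slope}. Once (b) is established, (c) is essentially a formal consequence of (a), (b) and planar isotopy.
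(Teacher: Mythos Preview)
Your treatment of (a) and (d) is fine and matches the paper's (the paper merely says (a) is ``a direct consequence of the definitions'' and (d) follows from ``the duality axioms''; your Schur-plus-trace argument for (d) is the natural unpacking of that remark). For (c) the paper simply observes that the identity encodes the canonical decompositions
\[
\Hom_\cc(X\otimes Y,i)\;\cong\;\bigoplus_{j\in I}\Hom_\cc(X,j)\otimes_\kk\Hom_\cc(j\otimes Y,i)\;\cong\;\bigoplus_{j\in I}\Hom_\cc(X\otimes j,i)\otimes_\kk\Hom_\cc(Y,j),
\]
i.e.\ it is essentially (a) inserted between $X$ and $Y$; part (b) is not needed here, so your plan to invoke it is unnecessary.

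The real issue is (b). Your proposed strategy---insert a completeness relation from (a) and then collapse via (d)---does not work as stated: (d) lets you contract two \emph{scalar} strands, but in (b) the strands carrying $X$ and $Y$ cannot be removed this way, and it is precisely their presence that produces the slope operators. The paper's argument is different and more direct: starting from an $i$\nobreakdash-decomposition $(p_\alpha,q_\alpha)_{\alpha\in A}$ of $X^*\otimes j\otimes Y^*$, one bends the strands to define candidate morphisms $P_\alpha\co X\otimes i\otimes Y\to j$ and $Q_\alpha\co j\to X\otimes i\otimes Y$ (with the factor $\dim_r(j)/\dim_r(i)$ and $\SLO_X$ built into $P_\alpha$). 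One then checks that $(P_\alpha,Q_\alpha)_{\alpha\in A}$ is a $j$\nobreakdash-decomposition of $X\otimes i\otimes Y$ by computing $P_\alpha Q_\beta$ via traces: using $(\SLO_X)^*=\SLO_{X^*}^{-1}$ and \eqref{eq-traces-slope} one converts $\tr_r(P_\alpha Q_\beta)$ into $\tr_r(p_\beta q_\alpha)=\delta_{\alpha,\beta}\dim_r(i)$, whence $P_\alpha Q_\beta=\delta_{\alpha,\beta}\id_j$. The cardinality condition follows from $\nu_i(X^*\otimes j\otimes Y^*)=\nu_j(X\otimes i\otimes Y)$. So the trace identity \eqref{eq-traces-slope} that you mention is indeed the crux, but it enters through this explicit dual-decomposition verification rather than through any use of (a) or (d).
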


\begin{proof} Part
  (a) is a direct consequence of the definitions. Let us prove  Part (b).  Let
$(p_\alpha,q_\alpha)_{\alpha \in A}$ be an $i$-decomposition of $X^*
\otimes j \otimes Y^*$. For $\alpha,\beta \in A$,  set
\begin{equation*}
\psfrag{p}[Bc][Bc]{\scalebox{1}{$p_\alpha$}}
\psfrag{q}[Bc][Bc]{\scalebox{1}{$q_\alpha$}}
\psfrag{X}[Bl][Bl]{\scalebox{.9}{$Y$}}
\psfrag{k}[Br][Br]{\scalebox{.9}{$X$}}
\psfrag{i}[Br][Br]{\scalebox{.9}{$i$}}
\psfrag{j}[Bl][Bl]{\scalebox{.9}{$j$}}
\psfrag{F}[Bc][Bc]{\scalebox{.9}{$\SLO_X$}}
P_\alpha= \frac{\dim_r(j)}{\dim_r (i)}\;\,\rsdraw{.5}{.9}{tensor12aSL2},  \quad Q_\alpha=\rsdraw{.5}{.9}{tensor12b},
\quad   f_{\alpha,\beta}=\; \, \psfrag{X}[Br][Br]{\scalebox{.9}{$Y$}}
\psfrag{p}[Bc][Bc]{\scalebox{1}{$p_\beta$}}
\psfrag{k}[Br][Br]{\scalebox{.9}{$X$}}
\psfrag{i}[Br][Br]{\scalebox{.9}{$i$}}
\psfrag{j}[Bl][Bl]{\scalebox{.9}{$j$}} \rsdraw{.5}{.9}{tensor12c2}\;\,.
\end{equation*}
We need to prove that $(P_\alpha,Q_\alpha)_{\alpha \in A}$ is a $j$-decomposition of $X \otimes i \otimes Y$.
Let $\alpha,\beta \in A$. Since $(\SLO_X)^*=\SLO_{X^*}^{-1}$ and using \eqref{eq-traces-slope}, we obtain
\begin{align*}
P_\alpha Q_\beta & =\frac{\tr_r(P_\alpha Q_\beta)}{\dim_r(j)} \, \id_j = \frac{\tr_l(f_{\alpha,\beta} \SLO_{X^*}^{-1})}{\dim_r(i)}  \, \id_j = \frac{\tr_r(f_{\alpha,\beta})}{\dim_r(i)}  \, \id_j\\
& = \frac{\tr_r(q_\alpha p_\beta)}{\dim_r(i)}\, \id_j= \frac{\tr_r(p_\beta q_\alpha )}{\dim_r(i)} \, \id_j
 = \frac{\tr_r(\delta_{\alpha,\beta}\, \id_i)}{\dim_r(i)} \, \id_j=\delta_{\alpha,\beta}\, \id_j.
\end{align*}
We conclude using that ${\rm  {card}} (A)=\nu_i(X^* \otimes j \otimes Y^*)=\nu_j(X
\otimes i \otimes Y)$.

Part (c) reflects the canonical isomorphisms
\begin{align*}
\Hom_\cc(X \otimes
Y,i)&\cong\bigoplus_{j \in I} \Hom_\cc(X,j) \otimes_\kk \Hom_\cc(j \otimes Y,i)\\
&\cong\bigoplus_{j \in I}  \Hom_\cc(X \otimes j,i)  \otimes_\kk \Hom_\cc(Y,j),
\end{align*}
and Part (d) is a direct consequence of the duality axioms.
\end{proof}

\subsection{Proof of Theorem~\ref*{thm-center-modular} and of the integrality of $\Lambda$}\label{sect-proof-modular}
Recall that $\zz(\cc)$ is a braided pivotal category which has a coend $(C,\sigma)$ with $C=\bigoplus_{i,j \in I} i^* \otimes j^* \otimes i \otimes j$. Therefore its dimension is well-defined and
\begin{gather*}
\dim \zz(\cc)=\dim_l(C,\sigma)=\dim_l(C)=\dim_l\left (\sum_{i,j \in I} i^* \otimes j ^* \otimes i \otimes j \right ) \\ = \sum_{i,j \in I} \dim_l(i^*) \dim_l(j^*) \dim_l(i) \dim_l(j)\\= \left (\sum_{i \in I} \dim_r(i)  \dim_l(i) \right)\left (\sum_{j \in I}  \dim_r(j) \dim_l(j) \right)=\dim(\cc)^2.
\end{gather*}

Let us prove that the canonical pairing of the coend $(C,\sigma)$ is non-degenerate. Define the morphism $\lambda \co C \to \un$ as follows and recall  the definition of the morphism $\Lambda\co \un \to C$ of \eqref{eq-def-Lambda}:
$$
 \lambda =
 \psfrag{i}[Bl][Bl]{\scalebox{.85}{$i$}}
\sum_{i\in I} \,\dim_r(i) \;\;\; \rsdraw{.30}{.9}{C-integ}
\qquad \text{and} \qquad
\Lambda=\sum_{j \in I} \, \dim_r(j) \,  \psfrag{i}[Bl][Bl]{\scalebox{.85}{$j$}}
 \;\rsdraw{.5}{.9}{C-Lambda}\;.
$$
 Firstly, $\Lambda$ is a morphism in $\zz(\cc)$ from $\un_{\zz(\cc)}=(\un,\id)$ to $(C,\sigma)$. Indeed, using the description of the half braiding $\sigma$ given in \eqref{eq-half-braiding-coend-ZC}, we obtain that for any object~$Y$ of~$\cc$,
\begin{gather*}
\sigma_Y(\Lambda \otimes \id_Y)=
 \psfrag{i}[Bl][Bl]{\scalebox{.85}{$i$}}
 \psfrag{j}[Bl][Bl]{\scalebox{.85}{$j$}}
 \psfrag{k}[Bl][Bl]{\scalebox{.85}{$k$}}
 \psfrag{l}[Bl][Bl]{\scalebox{.85}{$\ell$}}
 \psfrag{n}[Bl][Bl]{\scalebox{.85}{$n$}}
 \psfrag{X}[Bl][Bl]{\scalebox{.85}{$Y$}}
 \!\!\sum_{j,k,\ell,n\in I} \dim_r(j) \;\;\; \rsdraw{.50}{.9}{C-Lambda-ZC1} \\
 =
 \psfrag{i}[Bl][Bl]{\scalebox{.85}{$i$}}
 \psfrag{j}[Bl][Bl]{\scalebox{.85}{$j$}}
 \psfrag{k}[Bl][Bl]{\scalebox{.85}{$k$}}
 \psfrag{l}[Bl][Bl]{\scalebox{.85}{$\ell$}}
 \psfrag{n}[Bl][Bl]{\scalebox{.85}{$n$}}
 \psfrag{X}[Bl][Bl]{\scalebox{.85}{$Y$}}
 \!\!\sum_{j,k,\ell,n\in I}  \frac{\dim_r(\ell)}{\slo(n)} \;\;\; \rsdraw{.50}{.9}{C-Lambda-ZC2} \quad \text{by Lemma~\ref{firstlemmagraphic}(b)}\\
 =
 \psfrag{i}[Bl][Bl]{\scalebox{.85}{$i$}}
 \psfrag{j}[Bl][Bl]{\scalebox{.85}{$j$}}
 \psfrag{k}[Bl][Bl]{\scalebox{.85}{$k$}}
 \psfrag{l}[Bl][Bl]{\scalebox{.85}{$\ell$}}
 \psfrag{n}[Bl][Bl]{\scalebox{.85}{$n$}}
 \psfrag{X}[Bl][Bl]{\scalebox{.85}{$Y$}}
 \!\!\sum_{k,\ell,n\in I}  \frac{\dim_r(\ell)}{\slo(n)}\;\;\; \rsdraw{.50}{.9}{C-Lambda-ZC3} \quad \text{by Lemma~\ref{firstlemmagraphic}(a)}\\
 =
 \psfrag{i}[Bl][Bl]{\scalebox{.85}{$i$}}
 \psfrag{j}[Bl][Bl]{\scalebox{.85}{$j$}}
 \psfrag{k}[Bl][Bl]{\scalebox{.85}{$k$}}
 \psfrag{l}[Bl][Bl]{\scalebox{.85}{$\ell$}}
 \psfrag{n}[Bl][Bl]{\scalebox{.85}{$n$}}
 \psfrag{X}[Bl][Bl]{\scalebox{.85}{$Y$}}
 \sum_{\ell,n\in I}  \frac{\dim_r(\ell)}{\slo(n)} \;\;\; \rsdraw{.50}{.9}{C-Lambda-ZC4} \\
  =
 \psfrag{i}[Bl][Bl]{\scalebox{.85}{$i$}}
 \psfrag{j}[Bl][Bl]{\scalebox{.85}{$j$}}
 \psfrag{k}[Bl][Bl]{\scalebox{.85}{$k$}}
 \psfrag{l}[Bl][Bl]{\scalebox{.85}{$\ell$}}
 \psfrag{n}[Bl][Bl]{\scalebox{.85}{$n$}}
 \psfrag{X}[Bl][Bl]{\scalebox{.85}{$Y$}}
 \sum_{\ell,n\in I}  \dim_r(\ell) \;\;\; \rsdraw{.50}{.9}{C-Lambda-ZC5} \quad \text{by Lemma~\ref{firstlemmagraphic}(d)}\\
  = \id_Y \otimes \Lambda \quad \text{by Lemma~\ref{firstlemmagraphic}(a).}
\end{gather*}
Secondly, $\lambda$ and $\Lambda$ satisfy  $\omega(\id_C \otimes \Lambda) = \lambda = \omega(\Lambda \otimes \id_C)$.
Indeed, using the description of the canonical pairing $\omega$ given in Figure~\ref{fig-struct-coend}, we obtain
\begin{gather*}
\omega(\id_C \otimes \Lambda)=
 \psfrag{i}[Bl][Bl]{\scalebox{.85}{$i$}}
 \psfrag{j}[Bl][Bl]{\scalebox{.85}{$j$}}
 \psfrag{k}[Bl][Bl]{\scalebox{.85}{$k$}}
 \psfrag{l}[Bl][Bl]{\scalebox{.85}{$\ell$}}
 \psfrag{n}[Bl][Bl]{\scalebox{.85}{$n$}}
 \psfrag{X}[Bl][Bl]{\scalebox{.85}{$Y$}}
 \!\!\sum_{i,j,\ell\in I} \dim_r(\ell)  \rsdraw{.50}{.9}{C-wLambda1}
  \end{gather*}
 \begin{gather*}
 =
 \psfrag{i}[Bl][Bl]{\scalebox{.85}{$i$}}
 \psfrag{j}[Bl][Bl]{\scalebox{.85}{$j$}}
 \psfrag{k}[Bl][Bl]{\scalebox{.85}{$k$}}
 \psfrag{l}[Bl][Bl]{\scalebox{.85}{$\ell$}}
 \psfrag{n}[Bl][Bl]{\scalebox{.85}{$n$}}
 \psfrag{X}[Bl][Bl]{\scalebox{.85}{$Y$}}
 \!\!\sum_{i,j,\ell\in I}   \dim_r(\ell) \rsdraw{.50}{.9}{C-wLambda2} \\
 =
 \psfrag{i}[Bl][Bl]{\scalebox{.85}{$i$}}
 \psfrag{j}[Bl][Bl]{\scalebox{.85}{$j$}}
 \psfrag{k}[Bl][Bl]{\scalebox{.85}{$k$}}
 \psfrag{l}[Bl][Bl]{\scalebox{.85}{$\ell$}}
 \psfrag{n}[Bl][Bl]{\scalebox{.85}{$n$}}
 \psfrag{X}[Bl][Bl]{\scalebox{.85}{$Y$}}
 \sum_{i,\ell\in I}  \frac{\dim_r(\ell)}{\dim_l(i)} \;\;\; \rsdraw{.50}{.9}{C-wLambda3} \quad \text{by Lemma~\ref{firstlemmagraphic}(d)}\\
   =
 \psfrag{i}[Bl][Bl]{\scalebox{.85}{$i$}}
 \psfrag{j}[Bl][Bl]{\scalebox{.85}{$j$}}
 \psfrag{k}[Bl][Bl]{\scalebox{.85}{$k$}}
 \psfrag{l}[Bl][Bl]{\scalebox{.85}{$\ell$}}
 \psfrag{n}[Bl][Bl]{\scalebox{.85}{$n$}}
 \psfrag{X}[Bl][Bl]{\scalebox{.85}{$Y$}}
\sum_{i,\ell\in I}  \frac{\dim_r(\ell)}{\dim_l(i)} \;\delta_{\ell,i^*}\;\; \rsdraw{.30}{.9}{C-wLambda4} \;\,
    = \,
 \psfrag{i}[Bl][Bl]{\scalebox{.85}{$i$}}
 \psfrag{j}[Bl][Bl]{\scalebox{.85}{$j$}}
 \psfrag{k}[Bl][Bl]{\scalebox{.85}{$k$}}
 \psfrag{l}[Bl][Bl]{\scalebox{.85}{$\ell$}}
 \psfrag{n}[Bl][Bl]{\scalebox{.85}{$n$}}
 \psfrag{X}[Bl][Bl]{\scalebox{.85}{$Y$}}
\sum_{i\in I} \; \dim_r(i) \;\;\; \rsdraw{.30}{.9}{C-integ} \;=\lambda,
\end{gather*}
and similarly
\begin{gather*}
\omega(\Lambda \otimes \id_C)=
 \psfrag{i}[Bl][Bl]{\scalebox{.85}{$i$}}
 \psfrag{j}[Bl][Bl]{\scalebox{.85}{$j$}}
 \psfrag{k}[Bl][Bl]{\scalebox{.85}{$k$}}
 \psfrag{l}[Bl][Bl]{\scalebox{.85}{$\ell$}}
 \psfrag{n}[Bl][Bl]{\scalebox{.85}{$n$}}
 \psfrag{X}[Bl][Bl]{\scalebox{.85}{$Y$}}
 \!\!\sum_{j,k,\ell\in I} \dim_r(j)  \; \rsdraw{.50}{.9}{C-wLambda5} \\
 =
 \psfrag{i}[Bl][Bl]{\scalebox{.85}{$i$}}
 \psfrag{j}[Bl][Bl]{\scalebox{.85}{$j$}}
 \psfrag{k}[Bl][Bl]{\scalebox{.85}{$k$}}
 \psfrag{l}[Bl][Bl]{\scalebox{.85}{$\ell$}}
 \psfrag{n}[Bl][Bl]{\scalebox{.85}{$n$}}
 \psfrag{X}[Bl][Bl]{\scalebox{.85}{$Y$}}
 \!\sum_{j,k\in I}   \dim_r(j) \; \delta_{j,k^*} \;\; \rsdraw{.50}{.9}{C-wLambda6} \\
   =
 \psfrag{i}[Bl][Bl]{\scalebox{.85}{$k$}}
 \psfrag{j}[Bl][Bl]{\scalebox{.85}{$j$}}
 \psfrag{k}[Bl][Bl]{\scalebox{.85}{$k$}}
 \psfrag{l}[Bl][Bl]{\scalebox{.85}{$\ell$}}
 \psfrag{n}[Bl][Bl]{\scalebox{.85}{$n$}}
 \psfrag{X}[Bl][Bl]{\scalebox{.85}{$Y$}}
\sum_{k\in I} \;\; \rsdraw{.30}{.9}{C-wLambda7} \quad \text{by Lemma~\ref{firstlemmagraphic}(d)}\\
    =
 \psfrag{i}[Bl][Bl]{\scalebox{.85}{$k$}}
\sum_{k\in I}  \; \dim_r(k) \;\;\; \rsdraw{.30}{.9}{C-integ} \;=\lambda.
\end{gather*}
This implies in particular that $\lambda$ is a morphism in $\zz(\cc)$ from $(C,\sigma)$ to $\un_{\zz(\cc)}$, since
 $\omega$ and $\Lambda$ are  morphisms in $\zz(\cc)$. 
Thirdly, $\lambda$ is a left integral for the Hopf algebra $(C,\sigma)$ in $\zz(\cc)$.  Indeed, using the description of the coproduct $\Delta$ and the unit $u$ given in Figure~\ref{fig-struct-coend}, we obtain
\begin{align*}
(\id_C \otimes \lambda)\Delta &=
 \psfrag{i}[Bl][Bl]{\scalebox{.85}{$i$}}
 \psfrag{j}[Bl][Bl]{\scalebox{.85}{$j$}}
 \psfrag{k}[Bl][Bl]{\scalebox{.85}{$k$}}
 \psfrag{l}[Bl][Bl]{\scalebox{.85}{$\ell$}}
 \psfrag{n}[Bl][Bl]{\scalebox{.85}{$n$}}
 \psfrag{X}[Bl][Bl]{\scalebox{.85}{$Y$}}
 \!\!\sum_{i,k,\ell,n\in I} \dim_r(k)  \; \rsdraw{.50}{.9}{C-wLambda8} \\
 &=
 \psfrag{i}[Bl][Bl]{\scalebox{.85}{$i$}}
 \psfrag{j}[Bl][Bl]{\scalebox{.85}{$j$}}
 \psfrag{k}[Bl][Bl]{\scalebox{.85}{$k$}}
 \psfrag{l}[Bl][Bl]{\scalebox{.85}{$\ell$}}
 \psfrag{n}[Bl][Bl]{\scalebox{.85}{$n$}}
 \psfrag{X}[Bl][Bl]{\scalebox{.85}{$Y$}}
 \!\sum_{i,k,\ell\in I} \dim_r(k)  \; \rsdraw{.50}{.9}{C-wLambda9}
  \end{align*}
\begin{align*}
  &=
 \psfrag{i}[Bl][Bl]{\scalebox{.85}{$i$}}
 \psfrag{j}[Bl][Bl]{\scalebox{.85}{$j$}}
 \psfrag{k}[Bl][Bl]{\scalebox{.85}{$k$}}
 \psfrag{l}[Bl][Bl]{\scalebox{.85}{$\ell$}}
 \psfrag{n}[Bl][Bl]{\scalebox{.85}{$n$}}
 \psfrag{X}[Bl][Bl]{\scalebox{.85}{$Y$}}
 \!\sum_{i,k,\ell\in I} \dim_r(k)  \; \rsdraw{.50}{.9}{C-wLambda10} \quad \text{by Lemma~\ref{firstlemmagraphic}(d)}\\
    &=
 \psfrag{i}[Bl][Bl]{\scalebox{.85}{$i$}}
 \psfrag{j}[Bl][Bl]{\scalebox{.85}{$j$}}
 \psfrag{k}[Bl][Bl]{\scalebox{.85}{$k$}}
 \psfrag{l}[Bl][Bl]{\scalebox{.85}{$\ell$}}
 \psfrag{n}[Bl][Bl]{\scalebox{.85}{$n$}}
 \psfrag{X}[Bl][Bl]{\scalebox{.85}{$Y$}}
 \!\sum_{i,k,\ell\in I} \dim_r(i)  \; \rsdraw{.50}{.9}{C-wLambda11} \quad \text{by Lemma~\ref{firstlemmagraphic}(b)}\\
    &=
 \psfrag{i}[Bl][Bl]{\scalebox{.85}{$i$}}
 \psfrag{j}[Bl][Bl]{\scalebox{.85}{$j$}}
 \psfrag{k}[Bl][Bl]{\scalebox{.85}{$k$}}
 \psfrag{l}[Bl][Bl]{\scalebox{.85}{$\ell$}}
 \psfrag{n}[Bl][Bl]{\scalebox{.85}{$n$}}
 \psfrag{X}[Bl][Bl]{\scalebox{.85}{$Y$}}
 \sum_{i,\ell\in I} \dim_r(i)  \; \rsdraw{.50}{.9}{C-wLambda12} \quad \text{by Lemma~\ref{firstlemmagraphic}(a)}\\
  &=u\, \lambda.
\end{align*}
Since $\omega(\Lambda \otimes \Lambda)=\lambda \Lambda=\dim_r(\un)=1\in\kk$ is invertible, we conclude by Lemma~\ref{lem-non-degen} that $\omega$ is non-degenerate. Hence $\zz(\cc)$ is modular.

 Finally, let us prove that $\Lambda$ is a two-sided integral of $(C,\sigma)$ which is invariant under the antipode. The last part of Lemma~\ref{lem-non-degen} gives that $\Lambda$ is a right integral of $(C,\sigma)$. Using the description of the antipode $S$ of $(C,\sigma)$ given in Figure~\ref{fig-struct-coend}, we obtain
\begin{align*}
S\Lambda &=
 \psfrag{j}[Bl][Bl]{\scalebox{.85}{$j$}}
 \psfrag{k}[Bl][Bl]{\scalebox{.85}{$k$}}
 \psfrag{l}[Bl][Bl]{\scalebox{.85}{$\ell$}}
 \sum_{j,k,\ell\in I} \, \dim_r(j)  \; \rsdraw{.50}{.9}{dem-Lambda-antip1} \\[.4em]
 &=
 \psfrag{j}[Bl][Bl]{\scalebox{.85}{$j$}}
 \psfrag{k}[Bl][Bl]{\scalebox{.85}{$k$}}
 \psfrag{l}[Bl][Bl]{\scalebox{.85}{$\ell$}}
 \sum_{j,k,\ell\in I} \, \dim_r(j)  \; \rsdraw{.50}{.9}{dem-Lambda-antip2} \quad \text{by Lemma~\ref{firstlemmagraphic}(c)}\\[.3em]
 &=
 \psfrag{k}[Br][Br]{\scalebox{.85}{$\ell$}}
 \psfrag{l}[Bl][Bl]{\scalebox{.85}{$\ell$}}
 \sum_{\phantom{j,}\ell\in I\phantom{k,}} \, \dim_r(\ell^*)  \, \rsdraw{.50}{.9}{dem-Lambda-antip3} \\[.6em]
 &=
 \sum_{\phantom{j,}\ell\in I\phantom{k,}} \, \dim_r(\ell) \;
 \psfrag{i}[Bl][Bl]{\scalebox{.85}{$\ell$}}
 \;\rsdraw{.5}{.9}{C-Lambda} \quad \text{by Lemma~\ref{firstlemmagraphic}(d)}\\
 &= \; \Lambda.
\end{align*}
Hence $\Lambda$ is $S$-invariant. This implies in particular that $\Lambda$, being a right integral, is also a left integral. Hence $\Lambda$ is a $S$-invariant (two-sided) integral.

\subsection{Proof of Theorem~\ref*{thm-center-semisimple}}\label{sect-proof-semisimple}
Consider the Hopf monad $Z$ of Section~\ref{sect-ZC-monadic}. Recall from \cite{BV2} that the monad $Z$ is said to be \emph{semisimple} if any $Z$-module is a $Z$-linear retract of a free $Z$-module, that is, of $(Z(X), \mu_X)$ for some object $X$ of $\cc$. Since  the isomorphism $\Phi\co \cc^Z \to \zz(\cc)$ defined in \eqref{eq-iso-Phi} sends the free $Z$-module $(Z(X), \mu_X)$ to the free half braiding $\Phi(Z(X), \mu_X)=\ff(X)$, we need to prove that $\dim(\cc)$ is invertible if and only if $Z$ is semisimple. Now Theorem 6.5 of \cite{BV2} provides an analogue of Maschke's semisimplicity criterion for
Hopf monads: the Hopf monad $Z$ is semisimple if and only if
there exists a morphism $\alpha \co \un \to Z(\un)$ in $\cc$ such
that
\begin{equation}\label{eq-Maschke}
\mu_\un \alpha = \alpha Z_0 \quad \text{and} \quad Z_0\alpha = 1.
\end{equation}
Let $\alpha\co \un \to Z(\un)=\oplus_{i \in I} i^* \otimes i$ be a morphism  in $\cc$.
Since $\cc$ is a fusion category, $\alpha$ decomposes uniquely as
$
\alpha=\sum_{i\in I} \alpha_i \, \rcoev_i
$ where $\alpha_i \in \kk$.
From Figure~\ref{fig-strucZ-fusion}, we obtain
$$
\alpha Z_0= \sum_{j,k \in I}   \alpha_k\,
\psfrag{k}[Br][Br]{\scalebox{.9}{$k$}}
\psfrag{i}[Br][Br]{\scalebox{.9}{$i$}}
\psfrag{j}[Bl][Bl]{\scalebox{.9}{$j$}}
\rsdraw{.45}{.9}{integ-lem5}
\quad \text{and} \quad
\mu_\un Z(\alpha) = \sum_{i,j,k \in I}  \alpha_i\,\;
\psfrag{k}[Br][Br]{\scalebox{.9}{$k$}}
\psfrag{i}[Br][Br]{\scalebox{.9}{$i$}}
\psfrag{j}[Bl][Bl]{\scalebox{.9}{$j$}}
\rsdraw{.45}{.9}{integ-lem3}\;.
$$
Thus, by duality, $\alpha Z_0=\mu_\un Z(\alpha)$ if and only if
$$
\sum_{j,k \in I}   \alpha_k\;
\psfrag{k}[Bl][Bl]{\scalebox{.9}{$k$}}
\psfrag{j}[Bl][Bl]{\scalebox{.9}{$j$}}
\rsdraw{.45}{.9}{integ-lem5b}
\;\; = \sum_{i,j,k \in I}  \alpha_i\,\;
\psfrag{k}[Br][Br]{\scalebox{.9}{$k$}}
\psfrag{i}[Br][Br]{\scalebox{.9}{$i$}}
\psfrag{j}[Bl][Bl]{\scalebox{.9}{$j$}}
\rsdraw{.45}{.9}{integ-lem3b}\; \text{in} \;\, \mathrm{End}_\cc\left ( \bigoplus_{k,j \in I} k \otimes j^* \right ).
$$
Now, for $j,k \in I$, by using Lemma~\ref{firstlemmagraphic}(b),
\begin{gather*}
\sum_{i\in I}  \alpha_i\,\;
\psfrag{k}[Br][Br]{\scalebox{.9}{$k$}}
\psfrag{i}[Br][Br]{\scalebox{.9}{$i$}}
\psfrag{j}[Bl][Bl]{\scalebox{.9}{$j$}}
\rsdraw{.45}{.9}{integ-lem3b} =
\sum_{i \in I}  \alpha_i\, \frac{\dim_r(k)}{\dim_r(i)}\,\;
\psfrag{k}[Br][Br]{\scalebox{.9}{$k$}}
\psfrag{i}[Bl][Bl]{\scalebox{.9}{$i$}}
\psfrag{j}[Bl][Bl]{\scalebox{.9}{$j$}}
\rsdraw{.45}{.9}{integ-lem3c} \;.
\end{gather*}
Therefore $\alpha Z_0=\mu_\un Z(\alpha)$ if and only if
\begin{equation}\label{eq-handleslide}
\alpha_k \, \id_{k \otimes j^*} =   \sum_{i \in I} \alpha_i\, \frac{\dim_r(k)}{\dim_r(i)} \,\;\psfrag{k}[Br][Br]{\scalebox{.9}{$k$}}
\psfrag{i}[Bl][Bl]{\scalebox{.9}{$i$}}
\psfrag{j}[Bl][Bl]{\scalebox{.9}{$j$}}
\rsdraw{.45}{.9}{integ-lem3c} \; \in \End_\cc(k \otimes j^*)
\;\text{
for all $k,j \in I$.}
\end{equation}
In particular, if $\alpha Z_0=\mu_\un Z(\alpha)$, then for any $i \in I$, setting $k = \un$ and $j = i^*$ we obtain $\alpha_i  = \alpha_\un \dim_r(i)$.
Conversely, if $\alpha_i = \alpha_\un \dim_r(i)$ for all $i \in I$, then \eqref{eq-handleslide} holds by Lemma~\ref{firstlemmagraphic}(a), and so $\alpha Z_0=\mu_\un Z(\alpha)$. In conclusion, $\alpha Z_0=\mu_\un Z(\alpha)$ if and only if $\alpha = \alpha_\un \kappa$, where
$$
\kappa = \sum_{i\in I} \dim_r(i) \, \rcoev_i \co \un \to Z(\un).
$$
In that case,
$$
Z_0\alpha=\alpha_\un Z_0\kappa=\sum_{i\in I}
\dim_r(i)Z_0\rcoev_i= \alpha_\un\sum_{i\in I}
\dim_r(i)\dim_l(i) = \alpha_\un\dim(\cc).
$$
Hence there exists $\alpha$ satisfying \eqref{eq-Maschke} if and only if $\dim(\cc)$ is invertible in $\kk$.
This concludes the proof of Theorem~\ref*{thm-center-semisimple}.

\subsection{Proof of Corollary~\ref*{cor-center-fusion}}\label{sect-proof-coro}
Let $\aaa$ be an abelian category. If $\aaa$ is semisimple (see Section~\ref{main-results}), then every object of $\aaa$ is projective\footnote{An object $P$  of $\aaa$ is \emph{projective} if the functor $\Hom_\aaa(P, -) \co \aaa \to \mathrm{Ab}$ is exact, where $\mathrm{Ab}$ is the category of abelian groups.}. The converse is true if in addition we assume that all objects of $\aaa$ have finite length\footnote{An object $A$ of $\aaa$ has \emph{finite length} if there exists a finite sequence of subobjects $A=X_0 \supsetneq X_1 \supsetneq \cdots \supsetneq X_n=\mathbf{0}$ such that each quotient
$X_i /X_{i + 1}$ is simple.}.

Assume $\kk$ is a field  and let $\cc$ be a pivotal fusion category over $\kk$. Then $\cc$ is abelian semisimple  and its objects have finite length. The center $\zz(\cc)$ of $\cc$ is then an abelian category and the forgetful functor $\uu\co \zz(\cc) \to \cc$ is \kt linear, faithful, and exact. This implies that
all objects of $\zz(\cc)$ have finite length  and the Hom spaces in $\zz(\cc)$ are finite dimensional.
As a result, $\zz(\cc)$ is semisimple if and only if all of its objects are projective.

We identify  $\zz(\cc)$ with the category $\cc^Z$ of $Z$\ti modules via the isomorphism \eqref{eq-iso-Phi}.
Recall from the proof of Theorem~\ref{thm-center-semisimple}  (see the beginning of Section~\ref{sect-proof-semisimple}) that the monad $Z$ is semisimple if and only if $\dim(\cc)$ is invertible in $\kk$.
The following lemma relates the notions of semisimplicity for monads and for categories.

\begin{lem}\label{lem-ssmonad-sscat}
Let $\cc$ be an abelian category and $T$ be a right exact monad on $\cc$, so that $\cc^T$ is abelian and the forgetful functor $U_T\co\cc^T \to \cc$ is exact. Then:
\begin{enumerate}
\labela
\item If all the $T$-modules are projective, then $T$ is semisimple.
\item If $T$ is semisimple and all the objects of $\cc$ are projective, then all the $T$-modules are projective.
\item If the objects of $\cc$ have finite length, then the same holds in $\cc^T$. If in addition  $\cc$ has finitely many isomorphy classes of simple objects, then so does $\cc^T$.
\end{enumerate}
\end{lem}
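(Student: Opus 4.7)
The plan is to exploit the free-forgetful adjunction $F_T \dashv U_T$ throughout, together with the hypothesis that $U_T$ is exact and faithful. For part (a), given any $T$\ti module $(M,r)$, the structural map $r$ itself is a morphism in $\cc^T$ from the free module $F_T(M) = (T(M),\mu_M)$ to $(M,r)$ (the axiom $r T(r) = r \mu_M$ is precisely that $(M,r)$ is a $T$\ti module), and it admits the section $\eta_M$ in $\cc$. Hence $r$ is split epi in $\cc$ and, since $U_T$ is exact and faithful, epi in $\cc^T$. Projectivity of $(M,r)$ then lifts $\id_{(M,r)}$ along $r$ to a section in $\cc^T$, exhibiting $(M,r)$ as a retract of the free module $F_T(M)$; so $T$ is semisimple.

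For part (b), the adjunction isomorphism $\Hom_{\cc^T}(F_T(X),-) \cong \Hom_\cc(X, U_T(-))$ together with exactness of $U_T$ shows that $F_T(X)$ is projective in $\cc^T$ whenever $X$ is projective in $\cc$. Since $T$ is semisimple, every $T$\ti module is a retract of some $F_T(X)$, and a retract of a projective is projective.

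For the finite length half of (c), the point is that $U_T$, being exact and faithful, detects the zero object and hence proper inclusions: if $(N,s) \subsetneq (M,r)$ in $\cc^T$, the cokernel is nonzero in $\cc^T$, so by faithfulness $U_T(N,s) \subsetneq U_T(M,r)$ strictly in $\cc$. Hence a strictly descending chain of $T$\ti submodules of $(M,r)$ is sent by $U_T$ to a strictly descending chain in $\cc$, which must terminate by hypothesis. Thus every object of $\cc^T$ has finite length.

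For finiteness of iso classes of simples, let $(S,r)$ be a simple $T$\ti module. The object $S = U_T(S,r)$ has finite length in $\cc$ and so contains some simple subobject $i$ of $\cc$. Under the adjunction, the inclusion $i \hookrightarrow S$ corresponds to a nonzero $T$\ti module morphism $F_T(i) \to (S,r)$, which is surjective by simplicity of $(S,r)$. Hence every simple $T$\ti module occurs as a simple quotient of some $F_T(i)$ with $i$ simple in $\cc$. By the finite length step just proved, $F_T(i) = (T(i),\mu_i)$ has finite length in $\cc^T$ and so has only finitely many simple quotients up to iso. Since $\cc$ has only finitely many simple iso classes, so does $\cc^T$. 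The only mildly delicate ingredient in the whole argument is the fact that $U_T$ detects nonzero objects; this is where faithfulness and exactness must be combined, and everything else is a formal consequence of the adjunction.
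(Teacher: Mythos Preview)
Your proof is correct and follows essentially the same route as the paper's: in each part you use the free--forgetful adjunction exactly as the authors do, with the same key observations (the action $r$ is an epimorphism $F_T(M)\to(M,r)$, the adjunction isomorphism makes $F_T(X)$ projective when $X$ is, and simple $T$-modules are quotients of $F_T(\Sigma)$ for simple $\Sigma$). Your write-up is slightly more explicit in justifying why $r$ is epi in $\cc^T$ and why $U_T$ reflects proper inclusions, but the underlying argument is identical.
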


\begin{proof}
Let us prove Assertion (a). Denote by $F_T\co \cc \to \cc^T$ the free module functor (see Section~\ref{sect-Hopf-monoads}). Let $(M,r)$ be a  $T$\ti module. The action $r$ defines an epimorphism $F_T(M) \to (M,r)$ in $\cc^T$.
In particular, if $(M,r)$ is projective, it is a retract of $F_T(M)$. Therefore if all the $T$-modules are projective,  the monad $T$ is semisimple.

Let us prove Assertion (b). Note that if $X$ is a projective object of $\cc$, then $F_T(X)$ is a projective $T$-module. Indeed,
$\Hom_{\cc^T}(F_T(X), ?\,) \simeq \Hom_{\cc}(X, U_T)$ by adjunction, and $\Hom_{\cc}(X, U_T)$  is an exact functor when $X$ is projective. In particular, if all objects are projective in $\cc$ then all free $T$\ti modules are projective. If in addition $T$ is semisimple, then any $T$\ti module, being a retract of a free $T$\ti module, is projective.

Finally, let us prove Assertion (c). The first part results from the fact that $U_T$ is faithful exact. Now if $S$ is a simple object of $\cc^T$ and $\Sigma$ is a simple subobject of $U_T(S)$, then by adjunction the inclusion $\Sigma \subset U_T(S)$ defines a non-zero morphism $F_T(\Sigma) \to S$, which is an epimorphism
because $S$ is simple. This proves the second part of Assertion (c), because under the assumptions made there are finitely many possibilities for $\Sigma$, and each $F_T(\Sigma)$ has finitely many simple quotients.
\end{proof}

Assertion (a) of Corollary~\ref{cor-center-fusion} results immediately from the first two assertions of Lemma~\ref{lem-ssmonad-sscat}.

Let us prove Assertion (b). A fusion category over a field is semisimple. Now assume $\kk$ is algebraically closed. By Assertion~(a), we need to show that if  $\zz(\cc)$ is semisimple, then it is a fusion category. Assume $\zz(\cc)$ is semisimple. Since $\cc$ is fusion, by the third assertion of Lemma~\ref{lem-ssmonad-sscat}, the category
$\zz(\cc)$ has finitely many classes of simple objects and its objets have finite length. So each object of $\zz(\cc)$ is a finite direct sum of simple objects. Since the unit object of $\zz(\cc)$ is scalar and any simple object $S$  of $\zz(\cc)$ is scalar (because $\End(S)$ is a finite extension of $\kk$), we obtain that  $\zz(\cc)$ is a fusion category. This concludes the proof of Corollary~\ref{cor-center-fusion}.

\end{document}